\numberwithin{equation}{section}
\newtheorem{theorem}{Theorem}[section]
\newtheorem{corollary}[theorem]{Corollary}
\newtheorem{lemma}[theorem]{Lemma}
\newtheorem{proposition}[theorem]{Proposition}
\newtheorem{remark}[theorem]{Remark}
\title{Interpolating vector fields for near identity maps
and averaging}
\author{V.~Gelfreich$^1$ and A.~Vieiro$^2$\\[4pt]
$^1$ \small Mathematics Institute, University of Warwick, \small Coventry CV4 7AL, UK\\
\small{\tt v.gelfreich@warwick.ac.uk}\\[4pt]
$^2$ \small Departament de Matem\`atiques i Inform\`atica,\\ \small
Universitat de Barcelona, Gran Via 585, 08007 Barcelona, Spain\\
\small{\tt vieiro@maia.ub.es}
}
\begin{document}
\maketitle
\begin{abstract}
For a smooth near identity map, we introduce the notion of
an interpolating vector field  written in terms of iterates of the map.
Our construction is based on  Lagrangian interpolation 
and provides an explicit expressions for autonomous
vector fields which approximately interpolate the map.
We study properties of the interpolating vector fields and
explore their applications to the study of dynamics.
In particular, we  construct adiabatic invariants for symplectic near identity maps.
We also introduce the notion of a Poincar\'e section for a near identity map
and use it to visualise dynamics of  four dimensional maps. 
We illustrate our theory with several examples, including  
the Chirikov standard map and a symplectic map in dimension four, 
an example motivated by the theory of Arnold diffusion.
\end{abstract}

%\tableofcontents

\section{Introduction} \label{Sec:Introduction}

Near identity maps naturally appear in several branches of mathematics and
mathematical physics. They are often used for numerical integration of
ordinary differential equations and for studying the  evolution under fast
time-periodic forcing. In the perturbation theory of integrable Hamiltonian
systems with three degrees of freedom,  near identity maps are used 
to describe  dynamics in a small neighbourhood of a double resonance, see e.g.
\cite{GSV2013}. The last problem is one of the central topics in the study of
the Arnold diffusion.
%There is a close link between near identity maps and ordinary differential
%equations. 
In these problems some important details of the discussion depend on the smoothness class of  
the map and, for the sake of simplicity, we mainly consider the 
infinitely differentiable case and, where appropriate, assume analyticity.

It is well known that a near identity map $F_{\epsilon}$, 
where $|\epsilon|  \ll   1$ and $F_0=\mathrm{I}$, is  formally embedded into an autonomous flow
\cite{Tak74}. Indeed, a formal construction can be used to find a vector field in the
form of a formal power series in $\epsilon$ such that the formal series of
the corresponding time-$\epsilon$ map coincides with the Taylor series of the original map.
Truncated  series define a vector field which interpolates the map with an
error of order of the first omitted term. 
Note that this construction is formal only
as in general the map cannot be embedded into an autonomous flow, i.e. 
generic $F_{\epsilon}$ cannot be represented as a time-$\epsilon$ map of an autonomous
vector field~\cite{Nei84}. Nevertheless, the difference between them
can be made  a flat function in $\epsilon$ at $\epsilon=0$~\cite{Sim00}.

An alternative approach to the study of near identity maps is based on
averaging (see e.g. \cite{Nei84}). A near identity map $F_{\epsilon}$ can be
written as a time-$\epsilon$ map of a time-periodic vector field called a {\em
suspension} of the map. Then the averaging procedure can be used recursively
to eliminate the time-dependence up to any order of the small parameter. 
In the analytic case, a suspension  can be found in the form of  an 
analytic autonomous vector field plus an exponentially small non-autonomous
 term \cite{Nei84} (see also \cite{BroRouSim96} for a concrete example).

Both these methods are constructive, nevertheless, none of them provides
an easy way to find an explicit expression for a vector field which
approximately interpolates the map. In this paper we propose a new method for
construction of such vector fields with the help of the Lagrange interpolation of
 iterates of the map, henceforth referred to as {\em interpolating vector fields}.
We  analyse the errors of the approximation of $F_{\epsilon}$ by the
time-$\epsilon$ map of the interpolating vector fields. 

The interpolating vector fields provide a useful tool for analysis of dynamics.
In particular, we introduce the notion of a ``Poincar\'e section'' for the map $F_\epsilon$.
Similarly to  classical Poincar\'e section, this construction
reduces the dimension of the phase space by one and, consequently, can be used to visualise trajectories of  four-dimensional maps.
Compared to the method of phase-space slices traditionally used for this purpose (see e.g. \cite{RLBK2014})
our method apparently shows higher resolution of details.

 In the case when the map $F_\epsilon$ is symplectic, 
 an interpolating vector field can be used to 
construct an adiabatic invariant $h_n$ where $n$ stands for the order of the
interpolating vector field. The function $h_n$ is approximately preserved
for many iterates of the map and can be used for a further reduction of
the dimension.

In order to illustrate the effectiveness  of the method and before discussing 
technical details involved in the construction,  
 let us consider a 4-dimensional near identity Froeschl\'e-like symplectic diffeomorphism
$T_{\epsilon}$ defined on $\mathbb T^2\times \mathbb R^2$. 
In coordinates $(\psi_1,\psi_2,J_1,J_2)$ this map is defined by the equation \eqref{4Dmap}
but the precise expression is not important for the current discussion.
Let $\Sigma$ be  a three-dimensional cylinder
defined by the equality $\psi_1=\psi_2$.  Fig.~\ref{3dPoinc}(a)  shows points
from several trajectories of $T_\epsilon$ projected along 
an interpolating vector field onto $\Sigma$.  We call  this picture  {\em a
Poincar\'e section for the map $T_\epsilon$}.

\begin{figure}[htbp]
	\begin{center}
		\includegraphics[width=14cm]{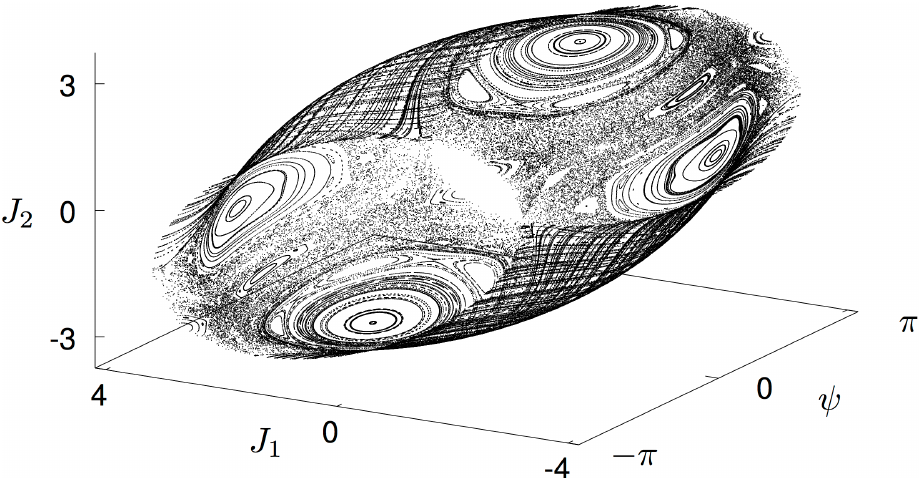} \\[10pt]
		(a)\\[25pt]
		\includegraphics[width=8cm]{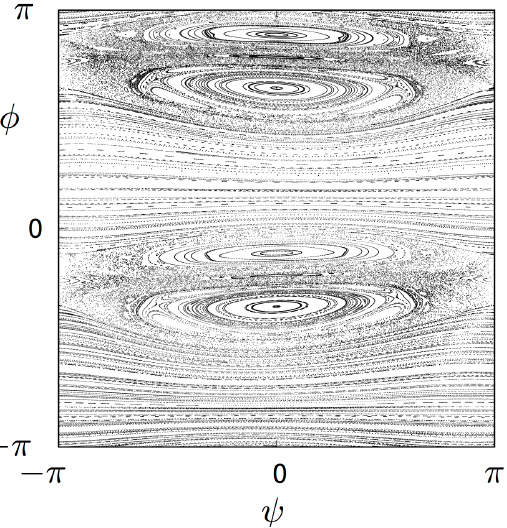}\kern 0.5cm
		\includegraphics[width=8cm]{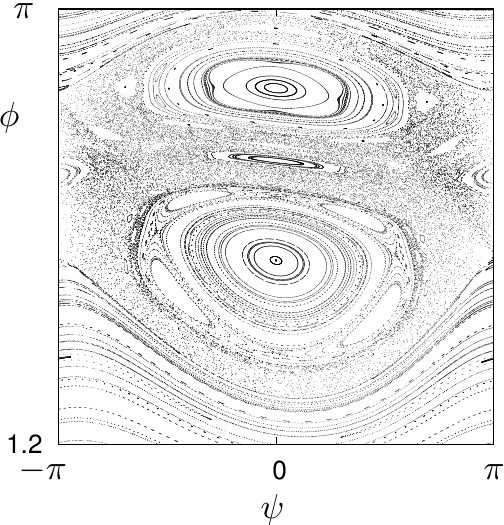}\\
		(b)\kern 7.25cm(c)
	\end{center}
	\caption{Dynamics of a symplectic map on a four-dimensional cylinder $\mathbb T^2\times\mathbb R^2$.
		The map is defined by \eqref{4Dmap} with parameter given by \eqref{parameters},  $\epsilon=0.35$,
		and 400 initial conditions are taken on $\Sigma \cap \{h_{10}=4\}$ (see Section~\ref{Se:numerics} for details of the
model and  numerical techniques used to produce these pictures).  
(a) Trajectories of the map are projected on a three-dimensional 
Poincar\'e section (a three dimensional cylinder $\mathbb T\times\mathbb R^2$)
along an interpolating vector field.
Note that all points are in a small neighbourhood of a two-dimensional torus.
(b) A two dimensional projection of  (a)
using the coordinate $\phi = \arg(J_1+ i J_2) \in
(-\pi,\pi]$ for the vertical axis. The analogy with the phase space of a
two-dimensional area-preserving map is apparent. 
(c) A magnification of (b)  where further details can be observed.}
	\label{3dPoinc}
\end{figure}

In Fig.~\ref{3dPoinc} all initial conditions are chosen from a torus defined by the intersection
of $\Sigma$ with a level set of the adiabatic invariant $h_{10}$. 
Since $h_{10}$ is approximately preserved by the map,
 all points shown on  Fig.~\ref{3dPoinc}(a)
 are in  a small neighbourhood of the  torus. Then  a
clearer image of the dynamics is obtained with the help two  angular 
coordinates, namely, $\psi$ and $\phi=\arg(J_1+i J_2)$.
  Fig.~\ref{3dPoinc}(b) shows
the  projection of  Fig.~\ref{3dPoinc}(a)   onto the torus $(\psi_1,\phi)$, and
Fig.~\ref{3dPoinc}(c) is a magnification of a strip located near the top  of Fig.~\ref{3dPoinc}(b).
 These picture
reveal that  the four-dimensional dynamics of $T_{\epsilon}$
resemble  a two-dimensional area-preserving map. This similarity
can be explained with the help of the normal form theory
\cite{Nekhoroshev1977}. The latter also provides an alternative visualisation
method (see \cite{EH13}). We stress that the method of this paper is much easier to
implement and provides higher accuracy in representation of details of the
dynamics as our pictures represent true trajectories of $T_\epsilon$ since  the
interpolation is used only for selecting initial conditions and projecting
points onto $\Sigma$.
 The details of the model and
the method are described in Section~\ref{Se:numerics}. 

\medskip

%An interpolating vector field is defined with the help of the following construction.
To give more formal definitions for the objects discussed in this paper, 
consider a smooth one-parameter near identity family of maps
$F_\epsilon: D\to \mathbb R^m$
where $m \geq 1$, $D\subset \mathbb R^m$ is an open domain
and $|\epsilon|<\epsilon_0$. Then the map can be written in the form
$$
F_\epsilon(x)=x+\epsilon \, G_\epsilon(x).
$$
The function $G_0$ obtained by setting $\epsilon=0$
 is often called {\em a limit vector field}.
Its time-$\epsilon$ map is $\epsilon^2$-close to~$F_\epsilon$.

The central object of this paper is  {\em an interpolating vector field for the
near identity map $F_\epsilon$} defined in the following way. Fix a
positive integer $n$.  Let $x\in D$ and suppose that the iterates $x_k=F_\epsilon^k(x)\in D$ are defined for $-n\le k\le n$.
Then there is a unique polynomial $p_n$  of degree $2n$ in $t$
 such that
\begin{equation}\label{Eq:interpol}
x_k=p_n(t_k,x_0,\epsilon)
\end{equation}
for every $t_k=\epsilon k$ with  $|k|\le n$.
The coefficients of $p_n$ depend on $x$ and $\epsilon$.
The interpolating vector field is defined as the velocity vector of
the interpolating curve at $t=0$, i.e.,
\begin{equation}\label{Eq:interpolVF}
X_n(x,\epsilon)=\partial_tp_n(0,x,\epsilon)
\end{equation}
where $\partial_tp_n$ denotes the derivative of $p_n$ with respect to  $t$.
In Section~\ref{Se:teo} we will use the Lagrange interpolation to show that
 the interpolating vector field $X_n$ is  a linear combination of the iterates of the initial point $x=x_0$,
\begin{equation}\label{Eq:symintvf}
X_n(x,\epsilon)=\epsilon^{-1}\sum_{k=1}^np_{nk}\left(x_k-x_{-k}\right),
\end{equation}
where 
\begin{equation}\label{Eq:pnk}
p_{nk}=\frac{(-1)^{k+1}(n!)^2}{k(n+k)!(n-k)!}\,.
\end{equation}
Note that the coefficients $p_{nk}$ are independent of the map $F_\epsilon$, e.g.,
 for $n=1$ we get
$$
X_1(x,\epsilon)=
\frac{
	F_\epsilon(x)-
	F_\epsilon^{-1}(x)
}{2\epsilon}.
$$
We will show that the time-$\epsilon$ map of this vector field is $|\epsilon|^3$-close to $F_\epsilon$.
Thus $X_1$ can provide a more accurate interpolation for $F_\epsilon$ than the limit flow.

If  $F_\epsilon:D\to D$ is a diffeomorphism, then
the domain $D$ is invariant and  the
equation~\eqref{Eq:symintvf} defines  $X_n$ on $D$ for every $n\in\mathbb N$.  
In general, we do not  assume that $D$ is  invariant,
and   iterates of a point $x_0\in D$ may leave the
domain. Nevertheless, it is not difficult to show that for any compact subset $D_0\subset D$,
 there is a constant $r_0>0$ such that
$x_k=F_\epsilon^k(x)$ is defined for every $x\in D_0$ and every integer $k$ 
provided $|\epsilon k|\le r_0$. Therefore $X_n$ is defined on $D_0$ for every $n\le r_0 |\epsilon|^{-1}$. 

If the map $F_\epsilon$ is a lift
of a diffeomorphism of a torus or cylinder, then the interpolating vector field is
also a lift of a vector field defined on the same manifold.  It is possible to
generalize the definition onto a general manifold 
but this discussion  is beyond the aims of the present paper.

We note that if  $F_\epsilon$ and $F_\epsilon^{-1}$ are both polynomial (e.g. the classical H\'enon map
and its generalisations), then the interpolating vector field is also polynomial.

In the symplectic case,  the interpolating vector fields remove the need of computing an
averaged Hamiltonian. This opens an opportunity of  performing massive numerical
explorations of a multi-dimensional  phase space, reducing the computation time significantly.
Possible dynamical applications of this visualizing techniques include, among
others,  studying of area-preserving maps in $\mathbb C^2$ and
of Arnold diffusion  for 4-dimensional symplectic maps near
double resonances.

The rest of this paper is organised as follows. In Section~\ref{Se:teo} we
briefly summarise the Lagrangian interpolation theory   and  derive
some properties of the interpolating vector fields.  Then we analyse the
accuracy of interpolation.  We discuss connections between the interpolating
vector field and suspensions of the map $F_\epsilon$.  In particular, we
discuss connections to averaging methods and   the optimality of choosing $n$  as a function of
$\epsilon$.  We show  that in the analytic case the interpolation error can
become exponentially small in $\epsilon$.  Finally, in subsection
\ref{Sec:adiabatic_invariant} we consider the symplectic setting.  Although the
interpolating vector field $X_n$ is not symplectic, it still can be used to
define an adiabatic invariant $h_n$ which remains approximately constant for
about  $\epsilon^{-2n}$ iterates of  the map $F_{\epsilon}$.

In  Section~\ref{Se:numerics} we present results of
our numerical experiments to illustrate the use of the interpolating vector fields
for exploring  the dynamics of maps in regions where
they are near the identity. 
First,  in Section~\ref{APM} we  consider the
2-dimensional symplectic case and  perform experiments with  the Chirikov standard map
as a model. This is a preliminary example which 
illustrates various useful features   of the interpolating vector fields.  Moreover, one can
compare the interpolating vector field directly with the map as the two dimensional dynamics
is easy to visualize.

The main advantage of interpolating vector fields is that they provide a useful
tool to investigate higher dimensional phase spaces. In particular, they provide
a powerful tool for computing projections of the discrete dynamics onto
subspaces of codimension-one. We use this tool to define 
Poincar\'e sections for maps, an extension of the classical construction
traditionally restricted to systems with continuous time only,
see details in Section~\ref{Poincare}.
We illustrate this construction for 
the 4-dimensional symplectic map $T_{\epsilon}$ to obtain representations of
different types of its dynamics with the help of plots in dimensions two and three.

\section{Interpolating vector fields \label{Se:teo}}

\subsection{Lagrange interpolating polynomials}

An interpolating polynomial can be explicitly written with the help of Lagrange
interpolating polynomials (see e.g. \cite{textbook}). Suppose that  nodes are located at integer points
symmetrically with respect to $\tau=0$.  Then Lagrange's fundamental
interpolating polynomial is
\begin{equation}
\pi_n(\tau)=\prod_{k=-n}^n(\tau-k)
\end{equation}
and Lagrange's basis polynomials are
\begin{equation}\label{Eq:lagrpol}
\pi_{nk}(\tau)=\frac{\pi_n(\tau)}{\pi_n'(k)(\tau-k)}.
\end{equation}
For every $n\in\mathbb N$ and $|k|\le n$  the equation~\eqref{Eq:lagrpol} defines the unique polynomial of
degree $2n$ such that $\pi_{nk}(j)=\delta_{kj}$ for every $|j|\le n$, where
$\delta_{kj}$ is the Kronecker delta. 
For $1\le |k|\le n$ we differentiate the
equation~\eqref{Eq:lagrpol} and, taking into account that $\pi_n(0)=0$, we get 
$
\pi_{nk}'(0)=-\frac{\pi_n'(0)}{k\pi_n'(k)}.
$
Since $\pi_n'(0)=(-1)^n(n!)^2$ and
\begin{equation}\label{Eq:pi_n'(k)}
\pi_n'(k)=\prod_{j=-n}^{k-1}(k-j)\prod^{n}_{j=k+1}(k-j)=(-1)^{n-k}(n+k)!(n-k)!
\end{equation}
we conclude  that  $\pi_{nk}'(0)=p_{nk}$ where 
\begin{equation}\label{Eq:pnk1}
p_{nk}=\frac{(-1)^{k+1}(n!)^2}{k(n+k)!(n-k)!}\,.
\end{equation}
 Since $\pi_n$ is odd, $\pi_{n0}$ is even
and, consequently,   $p_{n0}=\pi_{n0}'(0)=0$. The constants $p_{nk}$ are used in
the equation  \eqref{Eq:symintvf} which defines the interpolating vector fields.
Later we will need some properties of these
constants.

\begin{proposition}
	For every $n$ fixed, $(-1)^{k+1}p_{nk}$ is a monotone decreasing positive sequence  for $1\le k \le n$,
	and $p_{n,{-}k}=-p_{nk}$.
	Moreover, 
\begin{equation}
\label{Eq:sumpnk}
\sum_{k=-n}^n p_{nk}k^j=
\begin{cases}
1,&
\mbox{if $j=1$},\\
0,&\mbox{if $j=0$ or $2\le j\le 2n$}
,
\end{cases}
\end{equation}
\begin{equation}\label{Eq:|pnk|}
\sum_{k=1}^n|p_{nk}|= \frac{H_n}{2}\qquad\mbox{and}\qquad
\sum_{k=1}^n p_{nk}= H_{2n}-H_{n},
\end{equation}
where  $H_n=\sum_{k=1}^n k^{-1}$ is the $n$-th harmonic number.
\end{proposition}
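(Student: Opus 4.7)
The plan is to handle the four assertions in turn. The first two are immediate from the explicit formula \eqref{Eq:pnk1}: substituting $-k$ for $k$ gives $p_{n,-k}=-p_{nk}$ directly, while $(-1)^{k+1}p_{nk}=\frac{(n!)^{2}}{k(n+k)!(n-k)!}>0$. The ratio of successive positive terms simplifies to $\frac{k(n-k)}{(k+1)(n+k+1)}$, and since $(k+1)(n+k+1)-k(n-k)=2k^{2}+2k+n+1>0$ for $1\le k\le n-1$, this ratio is strictly less than one, yielding strict monotonic decrease.

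For the moment identities \eqref{Eq:sumpnk} I would rely on the fact that $p_{nk}=\pi_{nk}'(0)$. Because Lagrange interpolation is exact on polynomials of degree at most $2n$, one has $Q(\tau)=\sum_{k=-n}^{n}Q(k)\pi_{nk}(\tau)$ for every such $Q$; differentiating at $\tau=0$ produces the master identity $Q'(0)=\sum_{k=-n}^{n}Q(k)\,p_{nk}$. Applying it to $Q(\tau)=\tau^{j}$ for $0\le j\le 2n$ reads off \eqref{Eq:sumpnk} directly (with $p_{n0}=0$ absorbing the awkward $0^{0}$ in the $j=0$ case).

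For \eqref{Eq:|pnk|} the key observation is the Beta-function identity
\[
\binom{n}{k}\int_{0}^{1}t^{k-1}(1-t)^{n}\,dt
=\binom{n}{k}B(k,n+1)=\frac{(n!)^{2}}{k(n+k)!(n-k)!},
\]
which yields $p_{nk}=(-1)^{k+1}\binom{n}{k}\int_{0}^{1}t^{k-1}(1-t)^{n}\,dt$. Exchanging sum and integral and applying the binomial theorem to $\sum_{k=1}^{n}(-1)^{k+1}\binom{n}{k}t^{k}=1-(1-t)^{n}$ and to $\sum_{k=1}^{n}\binom{n}{k}t^{k}=(1+t)^{n}-1$ gives
\[
\sum_{k=1}^{n}p_{nk}=\int_{0}^{1}\frac{(1-t)^{n}-(1-t)^{2n}}{t}\,dt,
\qquad
\sum_{k=1}^{n}|p_{nk}|=\int_{0}^{1}\frac{(1-t^{2})^{n}-(1-t)^{n}}{t}\,dt.
\]

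Each integral is then evaluated using the elementary identity $\int_{0}^{1}\frac{(1-t)^{m}-1}{t}\,dt=-H_{m}$, which follows at once from the substitution $u=1-t$ and the geometric-series formula $(1-u^{m})/(1-u)=1+u+\cdots+u^{m-1}$. Writing $(1-t)^{n}-(1-t)^{2n}=\bigl[(1-t)^{n}-1\bigr]-\bigl[(1-t)^{2n}-1\bigr]$ gives $H_{2n}-H_{n}$. For the second integral the substitution $s=t^{2}$ converts $\int_{0}^{1}\frac{(1-t^{2})^{n}-1}{t}\,dt$ into $\tfrac{1}{2}\int_{0}^{1}\frac{(1-s)^{n}-1}{s}\,ds=-H_{n}/2$, so the same splitting yields $-H_{n}/2-(-H_{n})=H_{n}/2$. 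The main hurdle is spotting the Beta-function representation that turns the sums into elementary integrals; once it is in hand, everything that follows is routine manipulation.
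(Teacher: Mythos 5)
Your proposal is correct. For the moment identities \eqref{Eq:sumpnk} you use exactly the paper's argument: exactness of Lagrange interpolation on polynomials of degree at most $2n$ applied to $Q(\tau)=\tau^{j}$, followed by differentiation at $\tau=0$. The remaining claims are where you diverge, but only because the paper's proof stops there: the published proof covers \emph{only} \eqref{Eq:sumpnk} and silently omits the positivity/monotonicity, the antisymmetry $p_{n,-k}=-p_{nk}$, and both harmonic-number sums in \eqref{Eq:|pnk|}. Your treatment of these is sound. The antisymmetry and positivity are immediate from \eqref{Eq:pnk1}, and your ratio computation
\[
\frac{(-1)^{k+2}p_{n,k+1}}{(-1)^{k+1}p_{nk}}=\frac{k(n-k)}{(k+1)(n+k+1)}<1
\]
is verified by the stated difference $2k^{2}+2k+n+1>0$. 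The Beta-function representation $p_{nk}=(-1)^{k+1}\binom{n}{k}\int_{0}^{1}t^{k-1}(1-t)^{n}\,dt$ is correct (one checks $\binom{n}{k}B(k,n+1)=\frac{(n!)^2}{k(n+k)!(n-k)!}$), the binomial summations and the identity $\int_{0}^{1}\frac{(1-t)^{m}-1}{t}\,dt=-H_{m}$ are all elementary and correct, and the substitution $s=t^{2}$ delivering the factor $\tfrac12$ in the first sum is the right device. I confirmed both formulas against $n=1,2$. What your route buys is a self-contained derivation of \eqref{Eq:|pnk|} rather than an appeal to known alternating-harmonic binomial identities; the cost is the extra machinery of the Beta integral, which the paper evidently judged unnecessary to spell out. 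A purely combinatorial alternative (partial fractions on $\frac{(n!)^2}{k(n+k)!(n-k)!}$, or induction on $n$) would also work, but your argument is complete as written.
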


\begin{proof}
The first  equality can be derived from the following observation.
Let $0\le j\le 2n$. Interpolating any polynomial of degree $j$ with 
the help of a polynomial of degree $2n$, we recover the original polynomial.
Applying this rule to $\tau^j$ we get that 
$$
\sum_{k=-n}^n \pi_{nk}(\tau)k^j=\tau^j.
$$
Differentiating this equality with respect to $\tau$ at $\tau=0$ we obtain~\eqref{Eq:sumpnk}.
\end{proof}

Let $r>0$ and consider a continuous curve $\gamma:[-r,r]\to \mathbb R^m$.
For any $\epsilon\ne0$ and $n\in\mathbb N$ such that $n|\epsilon|<r$,
the equation
$$
p_n(t,\epsilon)=\sum_{k=-n}^n\pi_{nk}(\epsilon^{-1}t)\gamma(k \epsilon)
$$
defines the  unique  polynomial   of degree $2n$
such that $p_n(k\epsilon,\epsilon)=\gamma(k \epsilon)$ for $|k|\le n$.
Let $v_{n}(\gamma,\epsilon)=\partial_tp_n(0,\epsilon)$. Taking into account the expression for $p_n$
we get
\begin{equation}\label{Eq:vn}
v_{n}(\gamma,\epsilon)=
\epsilon^{-1}\sum_{k=-n}^np_{nk}\gamma(k \epsilon ).
\end{equation}
Note that  although this definition contains a division by $\epsilon$,
 the following lemma implies that $v_n$ can be extended to $\epsilon=0$ by continuity
 provided the curve is sufficiently smooth.

\begin{lemma}\label{Pro:gamma}
	If $\gamma\in \mathcal{C}^{2n+1}\bigl([-r,r]\bigr)$ 
	and $|\epsilon|n<r$, then
	\begin{equation}
\left|	\dot{\gamma}(0) -v_{n}(\gamma,\epsilon)\right|
\le \frac{\epsilon^{2n}(n!)^2}{(2n+1)!} \max_{|t|\le r}\left|\gamma^{(2n+1)}(t)\right|.
	\end{equation}
\end{lemma}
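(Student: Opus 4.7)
The plan is to recognise $\dot\gamma(0)-v_n(\gamma,\epsilon)$ as the derivative at $0$ of the Lagrange interpolation error, and then to bound it by the standard Rolle-theorem argument, adapted to the fact that $0$ is itself an interpolation node.

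First, let $E(t)=\gamma(t)-p_n(t,\epsilon)$, where $p_n(\cdot,\epsilon)$ is the degree-$2n$ polynomial introduced just before the statement. By construction $E$ vanishes at every node $k\epsilon$ with $|k|\le n$; in particular $E(0)=0$, and since $\partial_t p_n(0,\epsilon)=v_n(\gamma,\epsilon)$,
$$E'(0)=\dot\gamma(0)-v_n(\gamma,\epsilon).$$
The lemma therefore reduces to bounding $|E'(0)|$.

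Next I would invoke the classical error representation for Lagrange interpolation. For any non-node point $t_*\in[-r,r]$, consider the auxiliary function
$$\phi(s)=E(s)-\frac{E(t_*)}{\prod_{j=-n}^{n}(t_*-j\epsilon)}\prod_{j=-n}^{n}(s-j\epsilon),$$
which has $2n+2$ distinct zeros (the $2n+1$ nodes together with $t_*$). Applying Rolle's theorem $2n+1$ times yields some $\eta_{t_*}$ in the smallest interval containing the nodes and $t_*$ with $\phi^{(2n+1)}(\eta_{t_*})=0$. Since $p_n$ has degree $2n$ and the product $\prod_j(s-j\epsilon)$ is monic of degree $2n+1$, this identifies
$$E(t_*)=\frac{\gamma^{(2n+1)}(\eta_{t_*})}{(2n+1)!}\prod_{j=-n}^{n}(t_*-j\epsilon).$$

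The final step is the passage to the limit $t_*\to 0$ through non-nodes. Writing $\prod_{j=-n}^n(t_*-j\epsilon)=t_*\prod_{j\ne 0}(t_*-j\epsilon)$ and dividing the identity above by $t_*$, the left-hand side tends to $E'(0)$ because $E(0)=0$, while the explicit product tends to
$$\prod_{j\ne 0}(-j\epsilon)=(-1)^n(n!)^2\epsilon^{2n}.$$
The points $\eta_{t_*}$ all lie in the compact interval $[-n|\epsilon|,n|\epsilon|]\subset[-r,r]$, so Bolzano--Weierstrass together with the continuity of $\gamma^{(2n+1)}$ furnishes a subsequence along which $\gamma^{(2n+1)}(\eta_{t_*})\to\gamma^{(2n+1)}(\eta^*)$ for some $\eta^*\in[-r,r]$. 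Passing to the limit yields
$$E'(0)=\frac{(-1)^n(n!)^2\epsilon^{2n}}{(2n+1)!}\,\gamma^{(2n+1)}(\eta^*),$$
and taking absolute values gives precisely the bound in the statement.

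The main subtlety, as opposed to the familiar error formula at a non-node, is controlling the Rolle point in the limit $t_*\to 0$: one must observe that the factor $t_*$ extracted from the product cancels the vanishing of $E$ at the node $0$, and then invoke compactness and continuity to pass $\gamma^{(2n+1)}(\eta_{t_*})$ to the limit. Everything else is the elementary computation $\prod_{j\ne 0}(-j\epsilon)=(-1)^n(n!)^2\epsilon^{2n}$, which supplies the constant $(n!)^2/(2n+1)!$ in the lemma.
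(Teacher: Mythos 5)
Your proof is correct and follows essentially the same route as the paper: both rest on the classical Lagrange remainder $\gamma(t)-p_n(t,\epsilon)=\frac{\epsilon^{2n+1}\pi_n(\epsilon^{-1}t)}{(2n+1)!}\gamma^{(2n+1)}(\tau_*)$ together with the fact that $0$ is itself a node, so that differentiating the error at $t=0$ leaves only the factor $\pi_n'(0)=(-1)^n(n!)^2$. The differences are presentational: you re-derive the remainder formula via Rolle's theorem and handle the derivative at the node by letting $t_*\to0$ with a compactness argument, whereas the paper cites the standard theory and differentiates the remainder directly, using the continuity of $t\mapsto\gamma^{(2n+1)}(\tau_*(t,\epsilon))$ at the node.
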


\begin{proof}
    The standard theory of interpolating polynomials, see e.g.
    \cite{textbook}, states that
	$$
	\gamma(t)=p_n(t,\epsilon)+ \frac{\epsilon^{2n+1}\pi_n(\epsilon^{-1}t)}{(2n+1)!} w_n(t,\epsilon),
	$$
	where 	the function $w_n$ can be expressed  as $ w_n(t,\epsilon)=\gamma^{(2n+1)}(\tau_{*}(t,\epsilon))$ with
	$\tau_{*}(t,\epsilon)\in[-n\epsilon,n\epsilon]$,   an intermediate value of the time variable.
	Since the zeroes of $\pi$ are all simple, $w$ is continuous at $t=k\epsilon$.
	Differentiating  with respect to $t$ at $t=0$ and taking into account that  $\pi_n(0)=0$,
we get  
	$$
\dot\gamma(0)=v_{n}(\gamma,\epsilon)+
\frac{\epsilon^{2n} 	\pi_n'(0)}{(2n+1)!}
\gamma^{(2n+1)}(\tau_{*}(0,\epsilon))
	.
	$$
The estimate of the lemma immediately follows from \eqref{Eq:pi_n'(k)}. 
This inequality implies that $v_{n}(\gamma,\epsilon)\to\dot\gamma(0)$ when $\epsilon\to0$.
\end{proof}

\begin{lemma}[stability with respect to perturbations]\label{Pro:stability}
	If $\gamma_1,\gamma_2\in \mathcal{C}^0([-r,r])$,  $\epsilon\ne0$  and $n|\epsilon|<r$, then
	$$
	\left| 
	v_n(\gamma_2,\epsilon)-v_n(\gamma_1,\epsilon)
	\right|
	\le |\epsilon|^{-1}H_n\left\|\gamma_2-\gamma_1\right\|
	$$
	where $H_n= \sum_{k=1}^n k^{-1}$ is the $n$-th harmonic number.
\end{lemma}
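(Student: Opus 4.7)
The plan is to exploit linearity of the operator $v_n(\cdot,\epsilon)$ in its first argument, which is immediate from the explicit formula \eqref{Eq:vn}, and then bound the resulting linear combination by the sup-norm of the difference using the already-established identity for $\sum_{k=1}^n |p_{nk}|$.

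First I would set $\eta = \gamma_2 - \gamma_1 \in \mathcal{C}^0([-r,r])$ and observe that
\begin{equation*}
v_n(\gamma_2,\epsilon) - v_n(\gamma_1,\epsilon) = v_n(\eta,\epsilon) = \epsilon^{-1}\sum_{k=-n}^n p_{nk}\,\eta(k\epsilon),
\end{equation*}
which makes sense because the hypothesis $n|\epsilon| < r$ ensures that $k\epsilon \in [-r,r]$ for every $|k|\le n$.

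Next I would apply the triangle inequality and bound each $|\eta(k\epsilon)|$ by $\|\eta\| = \|\gamma_2 - \gamma_1\|$, obtaining
\begin{equation*}
\left| v_n(\gamma_2,\epsilon) - v_n(\gamma_1,\epsilon) \right| \le |\epsilon|^{-1}\|\gamma_2 - \gamma_1\| \sum_{k=-n}^n |p_{nk}|.
\end{equation*}
Finally, using the symmetry $p_{n,-k} = -p_{nk}$ from the proposition (so that $|p_{n,-k}| = |p_{nk}|$) together with $p_{n0} = 0$, the symmetric sum collapses to $2\sum_{k=1}^n |p_{nk}|$, which equals $H_n$ by the identity \eqref{Eq:|pnk|}. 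This yields the claimed bound.

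There is no real obstacle here: the lemma is essentially an operator-norm estimate for the finite-difference functional $\gamma \mapsto v_n(\gamma,\epsilon)$, and all the work is already hidden in the combinatorial identity $\sum_{k=1}^n |p_{nk}| = H_n/2$ established in the previous proposition. The only subtle point worth mentioning is that the absence of any smoothness assumption on $\gamma_1, \gamma_2$ (beyond continuity) is natural because $v_n$ evaluates them only at the $2n+1$ nodes $k\epsilon$; the $|\epsilon|^{-1}$ factor is the unavoidable cost of numerically differentiating a merely continuous function.
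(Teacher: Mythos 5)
Your argument is correct and is essentially identical to the paper's proof: both apply the triangle inequality to the formula \eqref{Eq:vn}, bound each evaluation by the sup-norm, and invoke $\sum_{k=1}^n|p_{nk}|=H_n/2$ together with the antisymmetry $p_{n,-k}=-p_{nk}$ and $p_{n0}=0$ to evaluate $\sum_{k=-n}^n|p_{nk}|=H_n$. Your framing via linearity of $v_n$ in $\gamma$ is a cosmetic repackaging of the same computation.
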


\begin{proof}
	Using \eqref{Eq:vn} and the triangle inequality we get
	\begin{eqnarray*}
	\left|  v_n(\gamma_2,\epsilon)-v_n(\gamma_1,\epsilon)\right|
&\le&
|\epsilon|^{-1}\sum_{k=-n}^n|p_{nk}|\left| \gamma_2(k\epsilon)-\gamma_1(k\epsilon))\right|
\\
&\le&
|\epsilon|^{-1}\left\|\gamma_2-\gamma_1\right\|\sum_{k=-n}^n|p_{nk}|.
	\end{eqnarray*}
Then the first equality in \eqref{Eq:|pnk|} implies the estimate of the lemma.
\end{proof}

\subsection{Basic properties of interpolating vector fields}

The interpolating polynomial of the equation~\eqref{Eq:interpol} can be explicitly written 
in the form of a Lagrange interpolating polynomial, namely,
$$
p_n(t,x,\epsilon)=\sum_{k=-n}^n\pi_{nk}(\epsilon^{-1} t)x_k, \quad \text{ where }  x_k=F_{\epsilon}^k(x).
$$
Differentiating this equality with respect to $t$ at $t=0$ and using that $\pi_{nk}'(0)=p_{nk}$, 
we recover the explicit expression~\eqref{Eq:symintvf} for the interpolating vector field~\eqref{Eq:interpolVF}:
\begin{equation}\label{Eq:interpol_vf}
X_n(x,\epsilon)=\epsilon^{-1}\sum_{k=-n}^np_{nk}x_k
=\epsilon^{-1}\sum_{k=1}^np_{nk}(x_k-x_{-k})
.
\end{equation}
 For example, for $n=2$ we get
$$
X_2(x,\epsilon)=
\frac{2
}{3\epsilon}
\left(	F_\epsilon(x)-
F_\epsilon^{-1}(x)
\right)
-
\frac{1
}{12\epsilon}
\left(	F_\epsilon^2(x)-
F_\epsilon^{-2}(x)
\right).
$$
The definition of the interpolating vector field involves division by $\epsilon$,
but we will see that $X_n$ can be continuously extended to $\epsilon=0$.

\begin{proposition}
		Let $D_0 \subset D$ be compact. 
		Then there is $r_0>0$ such that the interpolating vector field is
                defined for all $x\in D_0$ and all $n\in\mathbb N$ such that $n|\epsilon|\le r_0$.
		For every $n$, the interpolating vector field $X_n$ is as
                smooth as the map $F_\epsilon$ itself and
$
X_n(x,0)=G_0(x).
$
\end{proposition}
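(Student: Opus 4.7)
The plan is to address the three claims of the proposition one at a time, starting with existence, then smoothness, then the evaluation at $\epsilon=0$.

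For existence of $X_n$ on $D_0$, I would choose a compact $D_1$ with $D_0 \subset \mathrm{int}(D_1) \subset D_1 \subset D$ and let $\delta>0$ be the distance from $D_0$ to the complement of $D_1$. Since $F_\epsilon(x) = x + \epsilon G_\epsilon(x)$ with $G_\epsilon$ continuous on the compact set $D_1 \times [-\epsilon_0,\epsilon_0]$, there is a uniform bound $M$ for $|G_\epsilon|$ there. A standard induction then shows that $|F_\epsilon^k(x) - x| \le |k\epsilon| M$ as long as the trajectory stays inside $D_1$, so choosing $r_0 := \delta/M$ guarantees that $x_k = F_\epsilon^k(x) \in D_1 \subset D$ whenever $|k\epsilon| \le r_0$. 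In particular the sum \eqref{Eq:interpol_vf} defining $X_n$ is well-defined on $D_0$ for every $n$ with $n|\epsilon| \le r_0$.

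For smoothness, each iterate $F_\epsilon^k$ is a composition of maps with the same smoothness as $F_\epsilon$, so the numerator $f_n(x,\epsilon) := \sum_{k=1}^n p_{nk}\bigl(F_\epsilon^k(x) - F_\epsilon^{-k}(x)\bigr)$ is $C^\infty$ (or as smooth as $F_\epsilon$) in $(x,\epsilon)$ on its domain. The only potential obstacle is the division by $\epsilon$ at $\epsilon=0$. I would resolve this by noting that $F_0 = \mathrm{I}$ implies $f_n(x,0) = 0$, and the standard identity
\begin{equation*}
\frac{f_n(x,\epsilon)}{\epsilon} = \int_0^1 \partial_\epsilon f_n(x,s\epsilon)\,\mathrm{d}s
\end{equation*}
shows that $X_n$ extends smoothly across $\epsilon=0$ with the same regularity as $F_\epsilon$.

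Finally, to evaluate $X_n(x,0)$ I would Taylor-expand $F_\epsilon^k(x)$ in $\epsilon$ around $\epsilon=0$. Since $\partial_\epsilon F_\epsilon^k(x)\bigr|_{\epsilon=0} = k\,G_0(x)$ (one extra factor $G_0$ per iterate at leading order), one has $F_\epsilon^k(x) - F_\epsilon^{-k}(x) = 2k\epsilon\, G_0(x) + O(\epsilon^2)$. Substituting into \eqref{Eq:interpol_vf} yields
\begin{equation*}
X_n(x,\epsilon) = 2\,G_0(x)\sum_{k=1}^{n} k\, p_{nk} + O(\epsilon),
\end{equation*}
and the $j=1$ case of \eqref{Eq:sumpnk}, combined with the antisymmetry $p_{n,-k} = -p_{nk}$, gives $2\sum_{k=1}^n k p_{nk} = 1$. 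Passing to $\epsilon \to 0$ yields $X_n(x,0) = G_0(x)$ as claimed. The main (and only) subtle point is the smoothness of $X_n$ at $\epsilon=0$; once the vanishing of $f_n(x,0)$ is observed, the integral trick handles it cleanly, and the rest is bookkeeping with the identities already established for the coefficients $p_{nk}$.
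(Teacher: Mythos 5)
Your proposal is correct and follows the same overall strategy as the paper (compactness bound for the iterates, removal of the $\epsilon^{-1}$ singularity, and the $p_{nk}$ sum identities for the value at $\epsilon=0$), but the smoothness step is handled by a different mechanism. The paper telescopes the iterates, writing $x_k-x_0=\epsilon\sum_{j=0}^{k-1}g_{\pm1}(x_{\pm j},\epsilon)$ with the smooth functions $g_1=G_\epsilon$ and $g_{-1}$ (the latter supplied by the inverse function theorem), and then uses the identity $\sum_{k}p_{nk}=0$ (the $j=0$ case of \eqref{Eq:sumpnk}) to replace $\sum_k p_{nk}x_k$ by $\sum_k p_{nk}(x_k-x_0)$; this yields the explicit formula \eqref{Eq:XnGe} in which the division by $\epsilon$ has disappeared entirely, so smoothness is immediate and the evaluation at $\epsilon=0$ is a one-line substitution. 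You instead observe that the numerator $f_n$ vanishes at $\epsilon=0$ and invoke Hadamard's lemma via the integral representation $f_n(x,\epsilon)/\epsilon=\int_0^1\partial_\epsilon f_n(x,s\epsilon)\,ds$. This is perfectly valid in the $\mathcal C^\infty$ setting the paper works in; the only caveat is that for a family of finite smoothness class this route nominally costs one derivative in $\epsilon$, whereas the paper's explicit factorisation does not (given that $G_\epsilon$ itself is assumed smooth as part of the data). Two minor points you should make explicit: your displacement bound $|F_\epsilon^k(x)-x|\le|k\epsilon|M$ covers only forward iterates, and the backward iterates require the analogous bound for $F_\epsilon^{-1}=\mathrm{I}-\epsilon g_{-1}$ obtained from the inverse function theorem; and your final identity $2\sum_{k=1}^n kp_{nk}=1$ is exactly the $j=1$ case of \eqref{Eq:sumpnk} combined with the antisymmetry $p_{n,-k}=-p_{nk}$, as you note, so that part matches the paper's computation.
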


\begin{proof}
Let $F_\epsilon(x)=x+\epsilon G_{\epsilon}(x)=x+\epsilon g_1(x,\epsilon)$. Obviously, $g_1(x,0)=G_0(x)$. Then  the finite induction
implies that 
\begin{equation}
	\label{Eq:xk}
	x_k=F_\epsilon^k(x)=x_0+\epsilon\sum_{j=0}^{k-1} g_1(x_j,\epsilon)
\end{equation}
for every $k\in\mathbb N$ such that $x_j\in D$ for all $0\le j\le k$.
Similarly, the inverse function theorem implies that
$F^{-1}_\epsilon(x)=x-\epsilon g_{-1}(x,\epsilon)$, where
$g_{-1}(x,0)=-G_0(x)$. Repeating the previous arguments we conclude that
\begin{equation}
	\label{Eq:x-k}
	x_{-k}=F_\epsilon^{-k}=x_0+\epsilon\sum_{j=0}^{k-1} g_{-1}(x_{-j},\epsilon).
\end{equation}
Using the finite induction, the equations  \eqref{Eq:xk} and \eqref{Eq:x-k} and
compactness of $D_0$, we prove that there is $r_0>0$ such that
$F^k_\epsilon(x)\in D$ for all $x\in D_0$ provided  $|k\epsilon|<r_0$. 

Substituting the expressions \eqref{Eq:xk} and \eqref{Eq:x-k} into the
definition \eqref{Eq:interpol_vf} of $X_n$, and using \eqref{Eq:sumpnk} with $j=0$, we obtain
\begin{equation}\label{Eq:XnGe}
	X_n(x,\epsilon)=
	\sum_{k=-n}^{n}p_{nk}\sum_{j=0}^{|k|-1} g_{\mathrm{sign}( k)}(x_{\mathrm{sign}(k) j},\epsilon) 
\end{equation}
and  the smoothness of $X_n$ follows directly. Finally, we note that if $\epsilon=0$, then $x_j=x_0=x$	for every $j$.  
Substituting 
$\epsilon=0$ into the equation \eqref{Eq:XnGe} we get
$$
  X_n(x,0)= \sum_{k=-n}^{n}p_{nk}\sum_{j=0}^{|k|-1} g_{\mathrm{sign}( k)}(x,0)= \sum_{k=-n}^{n}p_{nk}kG_0(x) .
$$
Then the equation \eqref{Eq:sumpnk} with $j=1$ implies that $X_n(x,0)=G_0(X)$.
\end{proof}	

An interpolating vector field can be used to approximately restore a vector field from its time-$\epsilon$ map.

\begin{proposition}\label{Eq:resoreY}
	Let $\Phi^\tau$ be a time-$\tau$ flow of a smooth autonomous vector field~$Y$
	and $n\in\mathbb N$. Let $X_n$ be an interpolating vector field for the map $\Phi^\epsilon$.
	Then
	$$
	Y(x)=X_n(x,\epsilon)+O(\epsilon^{2n}).
	$$
	The estimate is uniform on every compact subset of $D$.
\end{proposition}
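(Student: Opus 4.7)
The plan is to reduce this to Lemma~\ref{Pro:gamma} by observing that when the map $F_\epsilon$ is the time-$\epsilon$ map of an autonomous flow, the iterates $x_k$ lie exactly on a single smooth trajectory, parametrised by continuous time.

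First I would set $\gamma(t)=\Phi^t(x)$. By the group property of the flow, $\gamma(k\epsilon)=\Phi^{k\epsilon}(x)=(\Phi^\epsilon)^k(x)=x_k$, and $\dot\gamma(0)=Y(x)$. Substituting into the definition \eqref{Eq:vn}, one immediately gets $v_n(\gamma,\epsilon)=X_n(x,\epsilon)$.

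Next, fix a compact $D_0\subset D$. Since $Y$ is smooth and its flow exists for short times on compact sets, there is $r>0$ such that $\gamma(t)=\Phi^t(x)$ is defined and of class $\mathcal{C}^{2n+1}$ on $[-r,r]$ for every $x\in D_0$; moreover, $\gamma^{(2n+1)}(t)=(\mathcal{L}_Y^{2n})Y(\Phi^t(x))$ (the $2n$-th iterated Lie derivative of $Y$ along itself) is bounded by a constant $M_n$ depending only on $Y$, $n$, and the compact tube $\Phi^{[-r,r]}(D_0)$. Restricting to $n|\epsilon|<r$ puts us in the hypotheses of Lemma~\ref{Pro:gamma}, which yields
\begin{equation*}
\left|Y(x)-X_n(x,\epsilon)\right|\le \frac{\epsilon^{2n}(n!)^2}{(2n+1)!}\,M_n,
\end{equation*}
uniformly for $x\in D_0$. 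This is the announced $O(\epsilon^{2n})$ estimate.

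There is no real obstacle: the only care needed is to ensure that the same interval $[-r,r]$ works uniformly for all $x\in D_0$ (standard compactness argument for the flow) and that the bound $M_n$ on $\gamma^{(2n+1)}$ is uniform on $D_0$, which is automatic since smooth vector fields and their Lie derivatives are bounded on compact sets. Note that here, in contrast to the general case of a map $F_\epsilon$, the exponent $2n$ rather than $2n+1$ arises because $\gamma$ is an exact trajectory of a vector field, so no additional error from the map not being embedded in a flow is incurred.
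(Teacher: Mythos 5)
Your proof is correct and follows essentially the same route as the paper: the authors likewise take $\gamma$ to be the integral curve of $Y$ through $x$ and invoke Lemma~\ref{Pro:gamma}, with the smoothness of the solution supplying the required bound on $\gamma^{(2n+1)}$. Your added remarks on uniformity over a compact $D_0$ merely spell out what the paper leaves implicit.
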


\begin{proof} 
	Let $x\in D$ 
	and $\gamma$ be a solution of the initial value problem for the ordinary differential equation 
	$\dot \gamma=Y(\gamma)$, $\gamma(0)=x$. 
	Then Proposition~\ref{Eq:resoreY} follows from Lemma~\ref{Pro:gamma} as the solution is smooth.
\end{proof}
	
We note that the following arguments can  also be used to prove the
proposition.  Expanding the solution $\gamma$ of the initial value problem into
Taylor series in time we get
$$
\gamma(t)=x+\sum_{k=1}^{2n}\frac{t^k}{k!}\gamma^{(k)}(0)+r_{2n}(t)
$$
where $\gamma^{(1)}(0)=Y(x)$ and $r_{2n}(t)=O(t^{2n+1})$.
Substituting this expression into the definition of the 
interpolating vector field we get
\begin{eqnarray*}
\epsilon	X_n(x,\epsilon)&=&\sum_{k=-n}^np_{nk}\gamma(\epsilon k)
	=
	\sum_{k=-n}^np_{nk}\left(\sum_{j=1}^{2n}\frac{\epsilon^j k^j}{j!}\gamma^{(j)}(0)+r_{2n}(k\epsilon)
	\right)
\\
	&=&\sum_{j=1}^{2n}\frac{\epsilon^j }{j!}\gamma^{(j)}(0)\sum_{k=-n}^nk^jp_{nk}+R_n(\epsilon)
=\epsilon  Y(x)+R_n(\epsilon),
\end{eqnarray*}
where $R_n(\epsilon)=\sum_{k=-n}^{n}p_{nk}r_{2n}(k\epsilon)=O(|\epsilon|^{2n+1})$.
Here we used the equation \eqref{Eq:sumpnk} to simplify the sum.

\begin{remark}
If $x_0$ is a fixed point of $F_\epsilon$, then $x_0$ is  an equilibrium of $X_n$.
\end{remark}

\begin{remark}
For a fixed $n$ and a small $\epsilon$ the interpolating vector field
$X_n(x,\epsilon)$ only involves values of $F_\epsilon$ from a small neighbourhood of
the point $x$.  
\end{remark}

\begin{remark}
If $F_\epsilon$ is a lift of a map defined on a cylinder (or
a torus), then the interpolating vector field $X_n$ is also a lift of a vector
field defined on the same manifold. This argument also applies to every
manifold obtained by factorising $\mathbb R^m$ with respect to
action of a discrete group of  (affine) linear transformations.
\end{remark}

We recall that a map $F_\epsilon$ is called {\em reversible}, if there is an  involution $R$
(i.e., $R\circ R=\mathrm{I}$) which conjugates the map and its inverse, i.e.,
$F_\epsilon^{-1}=R\circ F_\epsilon \circ R$. The involution $R$ is called a
{\em reversor}. In this paper we consider only the case when $R$ is a linear
map. This case is often used in applications.

\begin{proposition}
If $F_\epsilon$ is a reversible map with a linear reversor $R$, 
then the interpolating vector field is also reversible, 
i.e.~the application of the map $R$ changes the direction of time:
$$X_n(Rx)=-RX_n(x).$$
\end{proposition}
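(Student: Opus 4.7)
The plan is to exploit the reversibility relation at the level of iterates of $F_\epsilon$, use the explicit formula \eqref{Eq:interpol_vf} for $X_n$, and then pull the linear map $R$ out of the sum.

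First I would establish by finite induction that reversibility implies the stronger relation $F_\epsilon^{-k} = R \circ F_\epsilon^k \circ R$ for every $k \in \mathbb Z$. The base case $k=1$ is the definition; for general $k$, using $R^2 = \mathrm{I}$ we get
$$
F_\epsilon^{-k} = (F_\epsilon^{-1})^k = (R \circ F_\epsilon \circ R)^k = R \circ F_\epsilon^k \circ R.
$$
Applying both sides to $Rx$ and using $R^2 = \mathrm{I}$ once more yields the key identity
$$
F_\epsilon^k(Rx) = R\, F_\epsilon^{-k}(x) \qquad \text{for every } k \in \mathbb Z.
$$

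Next, I would denote $x_k = F_\epsilon^k(x)$ and $y_k = F_\epsilon^k(Rx)$, so that the identity above reads $y_k = R x_{-k}$. Substituting into the explicit formula \eqref{Eq:interpol_vf} for the interpolating vector field at the point $Rx$, and using that $R$ is linear (so it can be factored out of the sum), gives
\begin{align*}
X_n(Rx,\epsilon) &= \epsilon^{-1}\sum_{k=1}^n p_{nk}\bigl(y_k - y_{-k}\bigr)
= \epsilon^{-1}\sum_{k=1}^n p_{nk}\bigl(R x_{-k} - R x_{k}\bigr)\\
&= -R\left(\epsilon^{-1}\sum_{k=1}^n p_{nk}\bigl(x_k - x_{-k}\bigr)\right) = -R\, X_n(x,\epsilon).
\end{align*}

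There is no real obstacle here: the proof is essentially an algebraic manipulation. The only small care needed is in the inductive step establishing $F_\epsilon^{-k} = R\circ F_\epsilon^k\circ R$, and the observation that linearity of $R$ is essential for commuting $R$ past the sum and the scalar factor $\epsilon^{-1}$. One should also note implicitly that if all iterates $x_k$ for $|k|\le n$ are defined, then so are all $y_k = R x_{-k}$, so $X_n(Rx,\epsilon)$ is defined whenever $X_n(x,\epsilon)$ is.
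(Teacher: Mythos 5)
Your proof is correct and follows exactly the route the paper intends: the paper's ``proof'' is a one-sentence remark that the result follows from the symmetric expression \eqref{Eq:symintvf} together with the fact that $R$ maps trajectories of $F_\epsilon$ to trajectories of $F_\epsilon^{-1}$, which is precisely the identity $F_\epsilon^k(Rx)=RF_\epsilon^{-k}(x)$ that you establish and then substitute into the sum. Your write-up simply supplies the details (the induction and the use of linearity of $R$) that the paper leaves to the reader.
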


This proposition is proved by a straightforward computation using the symmetric
expression for the interpolating vector field \eqref{Eq:symintvf} and the fact
that the reversor $R$ maps a trajectory of $F_\epsilon$ into a trajectory of
$F_\epsilon^{-1}$.

\medskip

Let $F_\epsilon$ be a map defined on an one-dimensional interval. Suppose that
$x_0 \in \mathbb{R}$ is a fixed point of $F_\epsilon$ and let
$\lambda_\epsilon=F'_{\epsilon}(x_0)$. Then $x_0$ is an equilibrium for the
interpolating vector field, i.e. $X_n(x_0)=0$, and
$$
X_n'(x_0)=\epsilon^{-1}\sum_{k=1}^n p_{nk}\left(\lambda_\epsilon^k-\lambda_\epsilon^{-k }\right).
$$
If $\epsilon$ is sufficiently small,  the linear stability type of the fixed point is the same for the
map and for the interpolating vector field.  Moreover, 
since $F_\epsilon$ is a monotone function, it is easy to see that
the time-$\epsilon$ map of the interpolating vector field is topologically
conjugate to $F_\epsilon$. On the other hand, in general, the conjugating
function cannot be differentiable as the multipliers $\lambda_{\epsilon}$ and
$\exp(\epsilon X_n'(x_0))$ of the fixed points can be different. The
conclusions about topological equivalence do not require hyperbolicity of
$F_\epsilon$ and do not generalize to higher dimensions.

\subsection{Suspensions of a map and averaging}

In general it is not possible to construct an autonomous vector field such that
its time-$\epsilon$ map coincides with $F_\epsilon$. 
On the other hand,  it is possible to construct a time-periodic vector 
field with this property. Such vector field is called a {\em suspension of $F_\epsilon$}.
Suppose that $Y$ is a suspension of $F_\epsilon$. This vector field has the following
characteristic property:
if  $\xi=\xi(\tau,x,\epsilon)$ is a solution of the initial value problem
\begin{equation}\label{Eq:suspivp}
\partial_\tau\xi=\epsilon Y(\tau,\xi,\epsilon),
\qquad\xi(0,x,\epsilon)=x\,,
\end{equation}
then $\xi(1,x,\epsilon)=F_\epsilon(x)$. Note that we use the fast time
$\tau=\epsilon^{-1}t$ instead of the slow time $t$.

It is not too difficult to construct a suspension. Indeed, let
$\chi:[0,1]\to[0,1]$ be a monotone smooth ($\mathcal{C}^\infty$) function,
such that $\chi(0)=0$, $\chi(1)=1$ and $\chi^{(k)}(0)=\chi^{(k)}(1)=0$ for all
$k\in\mathbb N$. Then consider a curve parametrized by the function
\begin{equation}\label{Eq:intsol}
\xi(\tau,x,\epsilon)=x+\epsilon\chi(\tau ) G_\epsilon(x)
\end{equation}
with $\tau\in[0,1]$. This curve  connects the points $x$ and
$F_\epsilon(x)=x+\epsilon G_\epsilon(x)$.
Consider the map $\Phi_\epsilon^\tau : x\mapsto \xi(\tau,x,\epsilon)$. This map
is $\epsilon$-close to the identity in the $\mathcal{C}^1$ topology and, hence, a
local diffeomorphism by the inverse function theorem. Let
$\Psi_\epsilon^\tau=\left(\Phi_\epsilon^\tau\right)^{-1}$ and consider a non-autonomous vector
field 
\begin{equation}\label{Eq:Ysusp}
Y(\tau,x,\epsilon)=\chi'(\tau) G_\epsilon(\Psi_\epsilon^{\tau}(x))
\end{equation}
originally defined for $\tau\in[0,1]$ and extended periodically as a function of $\tau$.
 It is easy to see that $Y$ is smooth.

Obviously, the function \eqref{Eq:intsol} is a solution of the initial value problem
\eqref{Eq:suspivp} with the vector-field defined by \eqref{Eq:Ysusp}.
Consequently, the time-$\tau$ map of the vector field $\epsilon Y$ coincides
with $\Phi^\tau_\epsilon$ for $\tau\in[0,1]$.  Since
$\Phi_\epsilon^1=F_\epsilon$, the vector field $ Y$ is a suspension of the
map~$F_\epsilon$. 

Note that in this construction  solutions of the differential
equations~\eqref{Eq:suspivp} are given explicitly for $\tau\in[0,1]$. These
solutions extend recursively to other values of $\tau$ using the identity
$\xi(\tau,x,\epsilon)=\xi(\tau-1,F_\epsilon(x),\epsilon)$.  Since the function
$\chi$ is flat at $\tau=0$ and $\tau=1$, the function $\xi$ is smooth in time
as long as the solution remains in $D$. It is also possible to use
the flow $\Phi^\tau_\epsilon(x)=F_{\epsilon\chi(\tau)}(x)$ instead of (\ref{Eq:intsol}) in the construction
of the suspension.

\medskip

A suspension of $F_\epsilon$ is not unique and we are interested in finding a
suspension which is as close to an autonomous vector field as possible. This
task can be performed with the help of averaging~\cite{BogMit1961}. An averaging  step  consists
of a substitution of the form
\begin{equation}\label{Eq:changeS}
\xi=\xi_1+S(\tau,\xi_1,\epsilon)
\end{equation}
where the function $S$ is  periodic in $\tau$.  Substituting \eqref{Eq:changeS}
into  the differential equation  \eqref{Eq:suspivp} we get
\begin{equation}\label{Eq:susp_tmp}
(1+\partial_\xi S)\partial_\tau\xi_1+\partial_\tau S=\epsilon Y(\tau,\xi_1+S,\epsilon)
\end{equation}
where  $S$ is evaluated at $(\tau,\xi_1,\epsilon)$.
Let $\langle Y\rangle=\int_0^1 Yd\tau$ be the mean value of $Y$ over its period in $\tau$,
and $\tilde Y=Y-\langle Y\rangle$ be its oscillatory part with zero mean.
It is convenient to let
$$
S(\tau,\xi,\epsilon)=\epsilon \int_0^\tau \tilde Y(t,\xi,\epsilon)\,dt.
$$
This function is periodic in $\tau$ and 
 $S(k,\xi,\epsilon)=0$ for every $k\in\mathbb Z$. Then   the map
$\xi_1\mapsto \xi$ is $\epsilon$-close to the identity for every $\tau$ and  is exactly the identity
for $\tau\in\mathbb Z$. Therefore,   the change of variables
\eqref{Eq:changeS}
transforms $Y$  into another suspension of the same map $F_\epsilon$.
Multiplying the equation \eqref{Eq:susp_tmp} by the matrix $(1+\partial_\xi S)^{-1}$,   we write
the new suspension in  the form
$$
\partial_\tau\xi_1=\epsilon Y_1(\tau,\xi_1,\epsilon),
$$
where
$$
Y_1(\tau,\xi_1,\epsilon)=(1+\partial_\xi S)^{-1}\bigl( 
Y(\tau,\xi_1+S,\epsilon)
-
 \tilde Y(\tau,\xi_1,\epsilon)
\bigr).
$$
Taking into account that  $S=O(|\epsilon|)$, the definition of $\tilde Y$
and differentiability of $Y$ and $S$, we can check that 
 the oscillatory part of $Y_1$ vanishes at the leading order, i.e.,
$\|\tilde Y_1\|=O(\| \epsilon \tilde Y\|)$.
The averaging procedure can be repeated to further decrease the time-dependent part of the
suspension. Note that the expression for the vector field $Y_1$ contains a first derivative with respect to the space
variable, thus if $Y\in \mathcal{C}^k$ then we can claim that $Y_1\in \mathcal{C}^{k-1}$ only and the maximum number of averaging
steps is bounded by the smoothness class of~$Y$.
If $Y$ is infinitely differentiable,  the time-dependent part of the suspension vector field can be made
smaller than any power of $|\epsilon|$ by repeating the averaging procedure multiple times:
 after $n$ steps we obtain a suspension of the form
\begin{equation}\label{Eq:Yn}
Y_n(\tau,x,\epsilon)=A_n(x,\epsilon)+\epsilon ^n B_n(\tau,x,\epsilon).
\end{equation}
Neishtadt~\cite{Nei84} proved that if $Y$ is analytic in a complex neighbourhood of $D$
then after $n\sim |\epsilon|^{-1}$ averaging steps the time-dependent part of the suspension vector field
becomes exponentially small compared to $|\epsilon|$. I.e., if
$F_\epsilon$ is a family of maps analytic in a complex neighbourhood of $D$,
then there is a suspension vector field defined on $D$ such that
\begin{equation}\label{Eq:Neishtadt}
Y(\tau,x,\epsilon)=A(x,\epsilon)+B(\tau,x,\epsilon),
\end{equation}
with $B=O(\exp(-c/|\epsilon|))$ for some $c>0$.

\subsection{Suspensions of a map and  interpolating vector fields}

Although the averaging is an effective tool for  studying near identity maps,
it is not usually possible to find an explicit expression 
for the vector fields $Y_n$.

\begin{theorem}\label{Thm:AnXn}
Let $F_\epsilon:D\to\mathbb R^m$ be a smooth family of  near identity maps defined on
a domain
$D\subset \mathbb{R}^m$ with $m\geq 1$ and let $n\in\mathbb N$.  If a suspension of $F_\epsilon$ can be
written in the form
\begin{equation}
\label{Eq:Yn1}
Y(t,x,\epsilon)=A_n(x,\epsilon)+\epsilon^{2n} B_n(t,x,\epsilon)
\end{equation}
where the $\mathcal{C}^{2n}$ norms of $A_n$ and $B_n$ are bounded uniformly
with respect $\epsilon$, then for every compact $D_0\subset D$ there is a
constant $C_n$ such that
$$
\sup_{x\in D}	\left| A_n(x,\epsilon)-X_n(x,\epsilon)\right| \le C_n \epsilon^{2n} %\left\|Y\right\|_{\mathcal{C}^{2n-1}}
$$
where $X_n$ is the interpolating vector field for the map $F_\epsilon$.
\end{theorem}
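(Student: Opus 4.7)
The plan is to apply Lemma \ref{Pro:gamma} to a smooth curve that interpolates the orbit $\{x_k\}$ and whose derivative at $t=0$ encodes $A_n$. Such a curve is naturally produced by the suspension itself: if $\xi(\tau,x,\epsilon)$ is the solution of \eqref{Eq:suspivp} associated with the vector field $Y$ of the theorem, then by the defining property of a suspension $\xi(k,x,\epsilon)=F_\epsilon^k(x)=x_k$ for every $k\in\mathbb{Z}$. Rescaling to slow time by setting $\eta(t,x,\epsilon):=\xi(t/\epsilon,x,\epsilon)$, I obtain a smooth curve satisfying $\eta(k\epsilon,x,\epsilon)=x_k$ and $\partial_t\eta=Y(t/\epsilon,\eta,\epsilon)$. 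In particular $\dot\eta(0,x,\epsilon)=A_n(x,\epsilon)+\epsilon^{2n}B_n(0,x,\epsilon)$, while the definition \eqref{Eq:vn} of $v_n$ gives $v_n(\eta(\cdot,x,\epsilon),\epsilon)=X_n(x,\epsilon)$. Therefore Lemma \ref{Pro:gamma} yields
$$
\bigl|A_n(x,\epsilon)-X_n(x,\epsilon)\bigr|
\le \epsilon^{2n}\bigl|B_n(0,x,\epsilon)\bigr|
+\frac{\epsilon^{2n}(n!)^2}{(2n+1)!}\max_{|t|\le r_0}\bigl|\partial_t^{2n+1}\eta(t,x,\epsilon)\bigr|,
$$
where $r_0>0$ is chosen small enough so that the iterates $x_k$ stay in a fixed compact neighbourhood of $D_0$ for $|k\epsilon|\le r_0$, as already guaranteed by the considerations preceding the theorem.

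The main obstacle is to bound the last factor uniformly in $\epsilon$, because na\"ive differentiation of $Y(t/\epsilon,\eta,\epsilon)$ in $t$ produces negative powers of $\epsilon$ through the chain rule on the fast argument. The hypothesis \eqref{Eq:Yn1} is engineered precisely to avoid this: all fast-time dependence of $Y$ sits inside the prefactor $\epsilon^{2n}B_n$, so each $\partial_\tau$ on $B_n$ costs one power of $\epsilon$ while $2n$ such powers are held in reserve. A routine induction on $k$ shows that $\partial_t^k\eta$ is a polynomial expression in the spatial derivatives of $A_n$, the derivatives $\partial_x^i\partial_\tau^j B_n$ with $i+j\le k-1$, and the lower-order time derivatives of $\eta$, in which the smallest power of $\epsilon$ that can arise is $\epsilon^{2n-(k-1)}$. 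For $k=2n+1$ this exponent is $0$, and every factor is controlled by the $\mathcal{C}^{2n}$ norms of $A_n$ and $B_n$, which are uniform in $\epsilon$ by hypothesis. This produces a bound $\max_{|t|\le r_0}|\partial_t^{2n+1}\eta|\le M_n$ independent of $\epsilon$, and substituting into the previous inequality yields the theorem with $C_n=\sup|B_n|+\frac{(n!)^2}{(2n+1)!}M_n$.
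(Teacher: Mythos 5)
Your proof is correct and follows essentially the same route as the paper: both take the suspension solution $\xi$, pass to the slow-time curve $\gamma(t)=\xi(\epsilon^{-1}t,x,\epsilon)$ interpolating the iterates $x_k$, bound its $(2n+1)$-st derivative uniformly in $\epsilon$ using the structure \eqref{Eq:Yn1}, and conclude via Lemma~\ref{Pro:gamma}. Your induction on the powers of $\epsilon$ produced by differentiating the fast argument of $B_n$ is exactly the content of the paper's (terser) claim that $|\partial_\tau^k\xi|\le C|\epsilon|^k$ for $k\le 2n+1$.
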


\begin{proof}
	Since $D_0$ is compact, there is $\epsilon_0>0$ such that
every solution of
\begin{equation}\label{Eq:suspension}
\partial_\tau\xi(\tau,x,\epsilon)=\epsilon Y(\tau,\xi(\tau,x,\epsilon),\epsilon),
\qquad\xi(0,x,\epsilon)=x
\end{equation}
with  initial condition $x\in D_0$  remains inside $D$ for $|\tau|\le n$ and $|\epsilon|\le\epsilon_0$.
Since $Y$ is a suspension of $F_\epsilon$, we have $F_\epsilon^k(x)=\xi(k,x,\epsilon)$ for $|k|\le n$.
Repeatedly differentiating the equation \eqref{Eq:suspension} with respect to $\tau$ and taking into account the form of $Y$,
we see that
there is a constant $C$ such that
 $$
 \left|\partial _\tau^k\xi(\tau,x,\epsilon)\right|\le C |\epsilon|^k  %\left\|Y\right\|_{\mathcal{C}^{2n-1}}
 $$ 
 for $k\le 2n+1$.
Then the theorem follows from the  Lemma~\ref{Pro:gamma} with $\gamma(t)=\xi(\epsilon^{-1} t,x,\epsilon)$.
\end{proof}

This theorem has several important corollaries.	The first one states that the time-$\epsilon$ 
shift along trajectories of the interpolating vector field approximates $F_\epsilon$.

\begin{corollary} \label{Prop:vector_field}
	If $F_\epsilon\in \mathcal{C}^{2n+1}$ and $D_0$ is a compact subset of $D$ then on this subset the interpolating vector field
	is uniformly bounded for $|\epsilon|<\epsilon_0$ and
	$$
	F_\epsilon(x)=\Phi^\epsilon_{X_n}(x)+O(|\epsilon|^{2n+1})	
	$$ 
	where $\Phi^\epsilon_{X_n}$ is the time-$\epsilon$ map associated with the vector field $X_n$.
\end{corollary}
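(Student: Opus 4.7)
The plan is to deduce the corollary from Theorem~\ref{Thm:AnXn} together with a Gronwall comparison between two nearby time-$\epsilon$ flows. First I would fix a compact $D_0\subset D$ and invoke the averaging machinery of Section~2.3: iterating the averaging step sufficiently many times starting from the explicit suspension \eqref{Eq:Ysusp} produces a suspension of $F_\epsilon$ of the form $Y(\tau,x,\epsilon)=A_n(x,\epsilon)+\epsilon^{2n}B_n(\tau,x,\epsilon)$ with $A_n,B_n$ uniformly $C^{2n}$-bounded on a slightly larger compact set, for all $|\epsilon|\le\epsilon_0$. Theorem~\ref{Thm:AnXn} then gives the pointwise bound $\sup_{x\in D_0}|A_n(x,\epsilon)-X_n(x,\epsilon)|\le C_n\epsilon^{2n}$.

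The second step is the ODE comparison. Rewriting the suspension in the slow time $t=\epsilon\tau$, the map $F_\epsilon$ is the time-$\epsilon$ map of the non-autonomous vector field $\tilde Y(t,x,\epsilon)=A_n(x,\epsilon)+\epsilon^{2n}B_n(t/\epsilon,x,\epsilon)$. Comparing the solutions of $\dot u=\tilde Y(t,u,\epsilon)$ and $\dot v=A_n(v,\epsilon)$ with the common initial condition $x$, a standard Gronwall estimate over the interval $[0,\epsilon]$ yields
$$
\|F_\epsilon(x)-\Phi^\epsilon_{A_n}(x)\|=O(\epsilon\cdot\epsilon^{2n})=O(|\epsilon|^{2n+1})
$$
uniformly on $D_0$. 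A second Gronwall estimate, now using the bound $|A_n-X_n|=O(\epsilon^{2n})$ from Theorem~\ref{Thm:AnXn}, gives $\|\Phi^\epsilon_{A_n}(x)-\Phi^\epsilon_{X_n}(x)\|=O(|\epsilon|^{2n+1})$, and the triangle inequality combines these into the claimed estimate. Uniform boundedness of $X_n$ on $D_0$ is immediate from the earlier proposition of this section: $X_n$ is continuous in $(x,\epsilon)$ on the compact set $D_0\times[-\epsilon_0,\epsilon_0]$, so it attains a finite supremum there.

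The main obstacle is the smoothness bookkeeping. Each averaging step consumes one spatial derivative, so producing an $\epsilon^{2n}$-small remainder with $C^{2n}$-bounded $A_n,B_n$ in principle requires more regularity than $C^{2n+1}$; a sharper accounting (or a modest strengthening of the hypothesis) may be needed to reconcile the stated assumption with the argument above, though in the $C^\infty$ or analytic setting where the theory is typically applied this issue disappears. One must also confirm that the solutions of both ODE problems remain inside $D$ on the interval $[0,\epsilon]$, but this is automatic upon shrinking $\epsilon_0$, since $D_0$ is compactly contained in $D$ and the fields involved are uniformly bounded there.
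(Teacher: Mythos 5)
Your proof is correct and follows the route the paper intends: the paper offers no written proof of this corollary, merely asserting that it ``follows from'' Theorem~\ref{Thm:AnXn}, and your combination of that theorem with two Gronwall comparisons over the slow-time interval $[0,\epsilon]$ (suspension flow vs.\ flow of $A_n$, then flow of $A_n$ vs.\ flow of $X_n$) is exactly the missing argument. Your observation that $2n$ averaging steps, each costing one spatial derivative, cannot deliver $\mathcal{C}^{2n}$-bounded $A_n,B_n$ from a merely $\mathcal{C}^{2n+1}$ map is a fair criticism of the corollary's stated hypothesis rather than a gap in your argument; the paper itself acknowledges this derivative loss in its discussion of averaging and effectively works in the $\mathcal{C}^\infty$ or analytic setting, where the issue disappears.
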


\bigskip

If $F_\epsilon$ is analytic in a complex neighbourhood of $D_0$, then we
can choose $n\sim |\epsilon|^{-1}$ in order to obtain a  vector field  which
interpolates $F_\epsilon$ with an error exponentially small in  $\epsilon$.
To prove this, let $Y$ be the suspension of $F_\epsilon$ given by
\eqref{Eq:Neishtadt}. So its non-autonomous part $B$ is exponentially small in
$\epsilon$. Suppose that a solution of the autonomous equation
$\partial_t\eta(t,x,\epsilon)=A(\eta,\epsilon)$ with $\eta(0,x,\epsilon)=x\in D_0$ is analytic
in time for $|t|<3r$ because it remains inside the domain of the vector field $A$ for these values
of $t$.	Then the Cauchy estimate implies that $|\partial^{k+1}_t\eta|\le k!
r^{-k}\|A\|$ for $|t|\le r$.  Let $\tilde X_n(x,\epsilon)$ be the interpolating
vector field for the time-$\epsilon$ map of the flow of $A$.
Applying   Lemma~\ref{Pro:gamma} to the curve
$\gamma(t)=\eta(t,x,\epsilon)$,
 we obtain that for $n|\epsilon|<r$ 
$$
\left|A(x,\epsilon)-\tilde X_n(x,\epsilon)\right|\le \frac{\epsilon^{2n}(n!)^2}{2n+1}r^{-2n}\|A\|.
$$
Then  Stirling's formula implies
$$
\left|A(x,\epsilon)-\tilde X_n(x,\epsilon)\right|\le \left(\frac{\epsilon n }{re } \right)^{2n} \frac{4\pi n}{2n+1}\|A\|.
$$	
Since $B$ is small, the time-$t$ map of $A$ is close to the time-$t$ map of $Y$. In particular,
Lemma~\ref{Pro:stability} implies that 
$$
\left|X_n(x,\epsilon)-\tilde X_n(x,\epsilon)\right|\le  H_n |\epsilon|^{-1} C _A \|B\|.
$$
where $C_A>0$ is a suitable constant so that $C_A \|B\|$ bounds the distance
between the solutions of the initial value problems related to the vector
fields $A$ and $Y$ for $|t| \leq r$. Using the triangle inequality we get
$$
\left|A(x,\epsilon)-X_n(x,\epsilon)\right|
\le  \left(\frac{\epsilon n }{re } \right)^{2n} \frac{4\pi n}{2n+1}\|A\| + H_n |\epsilon|^{-1}C _A  \|B\|.
$$
Since $B = \mathcal{O}(e^{-c/|\epsilon|})$ for a suitable $c>0$ it follows that, taking $n=[r/|\epsilon|]$, one obtains
$$
\left|A(x,\epsilon)-X_n(x,\epsilon)\right|\le 2\pi \|A\| e^{-r/|\epsilon|} + |\epsilon|^{-1}\log |\epsilon|^{-1} C e^{-c/|\epsilon|}.
$$
Thus for this $n=n(\epsilon)$ the  vector field $X_n$ interpolates the map
with an error exponentially small  in $\epsilon$.

\subsection{Adiabatic invariants of  a symplectic map} \label{Sec:adiabatic_invariant}

Let  $m=2d$ and $\omega$ be a symplectic form on~$\mathbb R^{2d}$.
We will mainly consider the standard symplectic form
$$
\omega=\sum_{i=1}^d dx_i\wedge dx_{i+d}.
$$
A map is called {\em symplectic\/} if it preserves  $\omega$.
Even when  $F_\epsilon$ is symplectic, the corresponding interpolating vector field $X_n$ does not
necessary define a symplectic flow and, consequently, is not necessarily Hamiltonian. 
Therefore there is no reason to expect that $X_n$ has an integral.
Nevertheless, $X_n$  is very close to a symplectic one and 
can be used to define an adiabatic invariant  -- a function which is
approximately preserved on time scales much longer than~$|\epsilon|^{-1}$.

In order to construct the adiabatic invariant, we 
consider a differential one-form
$
\nu_n=\omega(X_n,\cdot).
$
In the case of the standard symplectic form
\begin{equation}\label{Eq:nustandard}
\nu_n=\sum_{i=1}^d\left(
X_n^i dx_{i+d}-X_n^{i+d}dx_{i}
\right),
\end{equation}
where $X_n^i$ denotes a component  of the vector  $X_n$.
Then we fix   a  base point $x_b\in D$ and, for every $x\in D$, consider
 an integral
\begin{equation}
	\label{Eq:hn}
h_n(x,\epsilon)=\int_{\gamma(x_b,x)}\nu_n
\end{equation}
where   $\gamma(x_b,x)$ is a curve which connects the base point $x_b$ and $x$.
If the form $\nu_n$ is closed and the domain $D$ is simply connected, this integral 
does not depend on the choice of the path (yet it depends on the choice of $x_b$) 
and uniquely defines the function $h_n:D\to\mathbb R$ such that  $dh_n=\nu_n$.

In general, the form $\nu_n$ is not necessarily closed and the integral may depend on the choice of the path.
In order to obtain a well-defined function we  choose a rule for selecting  $\gamma(x_b,x)$.
For example, if the domain $D$ is star-shaped (e.g. convex),  then
$\gamma$ can be a straight segment connecting its end points.
Another convenient choice is a path which consists of straight
segments parallel to coordinate axes. 

In contrast with the interpolating vector fields, 
a  suspension vector field  \eqref{Eq:Yn1} 
for symplectic map can be chosen (locally) Hamiltonian~\cite{Nei84,G2002}.

\begin{proposition}\label{Pro:hn}
Let $F_\epsilon$ be a smooth family of  near identity symplectic maps defined on
a domain
$D\subset \mathbb{R}^{2d}$ with $d\geq 1$ and let $n\in\mathbb N$.  
Let  $C>0$  be a constant and $\gamma(x_b,x)$ be piecewise smooth paths
such that $|\gamma(x_b,x)|\le C |x-x_b|$ for every $x\in D$.
If a suspension of $F_\epsilon$ can be written in the form of a Hamiltonian
vector field with a Hamiltonian function 
$$
h(t,x,\epsilon)=h^a_n(x,\epsilon)+\epsilon^{2n} h^b_n(t,x,\epsilon)
$$
where the $\mathcal{C}^{2n+1}$ norms of $h^a_n$ and $h^b_n$ are
bounded uniformly with respect $\epsilon$ and  $h_n^a(x_b,\epsilon)=0$,
then for every compact $D_0\subset D$ there is a constant $C_n$ such that
$$
\sup_{x\in D_0}	\left|h_n(x,\epsilon)-h_n^a(x,\epsilon)\right| \le C_n \epsilon^{2n}
$$
where $h_n$ is defined by \eqref{Eq:hn}.
 \end{proposition}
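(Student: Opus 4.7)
The strategy is to reduce the proposition to Theorem~\ref{Thm:AnXn} applied to the Hamiltonian suspension, then to exploit the fact that contracting $\omega$ with a Hamiltonian vector field gives an exact one-form. First I would observe that the hypothesis produces a decomposition of the suspension vector field as $Y = A_n + \epsilon^{2n} B_n$, where $A_n$ is the Hamiltonian vector field of $h_n^a$ and $B_n$ is that of $h_n^b$. Since taking a Hamiltonian vector field costs one derivative, the uniform $\mathcal{C}^{2n+1}$ bounds on the Hamiltonians translate to uniform $\mathcal{C}^{2n}$ bounds on $A_n$ and $B_n$, so Theorem~\ref{Thm:AnXn}, applied on a compact $D_1 \subset D$ to be chosen in a moment, yields
\[
\sup_{y \in D_1} |X_n(y,\epsilon) - A_n(y,\epsilon)| \le \tilde C_n\, \epsilon^{2n}.
\]

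Next I would use the defining identity of a Hamiltonian vector field, $\iota_{A_n}\omega = dh_n^a$, to split the one-form in \eqref{Eq:hn}:
\[
\nu_n = \omega(A_n,\cdot) + \omega(X_n - A_n,\cdot) = dh_n^a + \omega(X_n - A_n,\cdot).
\]
Integrating this identity along $\gamma(x_b,x)$ and using the normalisation $h_n^a(x_b,\epsilon) = 0$, the exact part contributes exactly $h_n^a(x,\epsilon)$, so
\[
h_n(x,\epsilon) - h_n^a(x,\epsilon) = \int_{\gamma(x_b,x)} \omega(X_n - A_n,\cdot).
\]
Bounding the integral by $\|\omega\|\cdot \|X_n - A_n\|_{L^\infty(D_1)} \cdot |\gamma(x_b,x)|$ and using the hypothesis $|\gamma(x_b,x)| \le C|x - x_b|$ together with the compactness of $D_0$ (so that $|x - x_b|$ is uniformly bounded) gives the desired $|h_n - h_n^a| \le C_n \epsilon^{2n}$.

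The only delicate point in the plan is the choice of the compact set $D_1$ on which Theorem~\ref{Thm:AnXn} is applied: it must contain not only $D_0$ but also the union of the paths $\gamma(x_b,x)$ for $x \in D_0$. Because these paths have length uniformly bounded by $C\,\mathrm{diam}(D_0 \cup \{x_b\})$, one can select $D_1$ as a compact neighbourhood of $\{x_b\} \cup \bigcup_{x \in D_0}\gamma(x_b,x)$ inside $D$; shrinking $\epsilon_0$ if necessary ensures that all iterates used in the definition of $X_n$ on $D_1$ remain inside $D$, so Theorem~\ref{Thm:AnXn} applies on $D_1$ exactly as stated. No other step requires anything beyond the triangle inequality, and I expect this packaging of the path neighbourhood to be the only genuinely non-automatic part of the argument.
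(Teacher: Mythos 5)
Your proposal is correct and follows essentially the same route as the paper: apply Theorem~\ref{Thm:AnXn} to the Hamiltonian vector field $A_n$ of $h_n^a$, use bilinearity of $\omega$ (equivalently $\iota_{A_n}\omega = dh_n^a$) so that the exact part of $\nu_n$ integrates to $h_n^a$ via the normalisation $h_n^a(x_b,\epsilon)=0$, and bound the remainder by $C_\omega\,\|X_n-A_n\|_\infty\,|\gamma(x_b,x)|$. Your explicit treatment of the compact set containing the union of the paths is a point the paper passes over silently, but it does not change the argument.
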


\begin{proof}
Let the path $\gamma(x_b,x)$ be  parametrized by a function $\gamma(t,x_b,x)$ with  $t\in[0,1]$,  $\gamma(0,x_b,x)=x_b$ and $\gamma(1,x_b,x)=x$.
Then  \eqref{Eq:hn} takes the form
$$
	h_n(x,\epsilon)=\int_0^1\omega\bigl(X_n(\gamma(t,x_b,x),\epsilon),\partial_t \gamma(t,x_b,x)\bigr)dt.
$$
Let $A_n$ be the Hamiltonian vector field defined by the Hamiltonian function $h_n^a$.
Then
$$
h_n^a(x,\epsilon)=\int_0^1\omega\bigl(A_n(\gamma(t,x_b,x),\epsilon),\partial_t \gamma(t,x_b,x)\bigr)dt.
$$
Using the bi-linearity of the symplectic form $\omega$, we obtain
$$
	h_n(x,\epsilon)-h_n^a(x,\epsilon)=\int_0^1\omega\bigl(X_n(\gamma(t,x_b,x),\epsilon)
	-A_n(\gamma(t,x_b,x),\epsilon),\partial_t \gamma(t,x_b,x)\bigr)dt.
$$	
Since $|\omega(v_1,v_2)|\le C_\omega |v_1|\,|v_2|$ for any two vectors $v_1,v_2$ (e.g. $C_\omega=1$ for the standard symplectic form), 
we get 
$$
|h_n(x,\epsilon)-h_n^a(x,\epsilon)|\le C_\omega\sup_{\tilde x\in \gamma(x_b,x)}
\bigl|
X_n(\tilde x,\epsilon)
-A_n(\tilde x,\epsilon)\bigr|\, | \gamma(x_b,x)|.
$$	
The proposition follows directly from Theorem~\ref{Thm:AnXn}.
\end{proof}

\begin{corollary} \label{Co:Deltahn}
For any compact $D_0 \subset D$ and $\forall x \in D_0$, one has
	\[ h_n(F_\epsilon(x),\epsilon)-h_n(x,\epsilon)=O(\epsilon^{2n}). \]
\end{corollary}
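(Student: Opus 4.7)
The plan is to use Proposition~\ref{Pro:hn} together with the fact that the autonomous part $h_n^a$ of the suspension Hamiltonian is almost conserved along the flow that generates $F_\epsilon$. Concretely, I would write
$$
h_n(F_\epsilon(x))-h_n(x) = \bigl[h_n(F_\epsilon(x))-h_n^a(F_\epsilon(x))\bigr] + \bigl[h_n^a(F_\epsilon(x))-h_n^a(x)\bigr] + \bigl[h_n^a(x)-h_n(x)\bigr],
$$
and bound each bracket separately.

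First, I would enlarge $D_0$ to a compact set $D_0'\subset D$ that contains $F_\epsilon(D_0)$ for all sufficiently small $|\epsilon|$; this is possible because $F_\epsilon$ is $O(\epsilon)$-close to the identity. Applying Proposition~\ref{Pro:hn} on $D_0'$ yields
$$
|h_n(x,\epsilon)-h_n^a(x,\epsilon)| \le C_n \epsilon^{2n}
$$
for every $x\in D_0'$, which controls the first and third brackets.

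For the middle bracket, I would exploit that $F_\epsilon$ is the time-$1$ map (in the fast time $\tau$) of the suspension $\partial_\tau \xi = \epsilon\,J\nabla h(\tau,\xi,\epsilon)$ with $h=h_n^a+\epsilon^{2n}h_n^b$. Then
$$
\frac{d}{d\tau}h_n^a(\xi(\tau,x,\epsilon),\epsilon) = \epsilon\,\{h_n^a,h\}(\xi,\epsilon) = \epsilon^{2n+1}\{h_n^a,h_n^b\}(\tau,\xi,\epsilon),
$$
since $\{h_n^a,h_n^a\}=0$. Integrating from $\tau=0$ to $\tau=1$ and using the uniform $\mathcal{C}^{2n+1}$ bounds on $h_n^a$ and $h_n^b$ from the hypotheses of Proposition~\ref{Pro:hn} (which control the Poisson bracket on the trajectory, itself staying in a compact neighbourhood of $D_0$), I obtain
$$
|h_n^a(F_\epsilon(x),\epsilon)-h_n^a(x,\epsilon)| = O(\epsilon^{2n+1}).
$$

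Combining the three estimates via the triangle inequality gives the claimed $O(\epsilon^{2n})$ bound, uniform on $D_0$. The only delicate point is ensuring the trajectory $\xi(\tau,x,\epsilon)$ remains in a compact subset of $D$ for $\tau\in[0,1]$ so that the Poisson bracket is uniformly bounded; this is handled by the same $\epsilon_0$-shrinking argument used in the proof of Theorem~\ref{Thm:AnXn}. Apart from that, the argument is a routine combination of the approximation estimate already proved and the standard observation that the Hamiltonian of an autonomous flow is preserved along that flow.
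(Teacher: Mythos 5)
Your argument is correct and follows the same overall strategy as the paper: both proofs reduce the claim to the statement that $h_n^a$ is nearly conserved under $F_\epsilon$, using Proposition~\ref{Pro:hn} twice to trade $h_n$ for $h_n^a$ at the cost of $O(\epsilon^{2n})$, and both then observe that the remaining middle term is of higher order. The only real difference is how that middle term is estimated. The paper compares $F_\epsilon$ with the time-$\epsilon$ map of the autonomous field $A_n$ (which preserves $h_n^a$ exactly) and uses the closeness $F_\epsilon-\Phi^\epsilon_{A_n}=O(|\epsilon|^{2n+1})$ together with the Lipschitz continuity of $h_n^a$; that closeness is itself a consequence of the suspension form \eqref{Eq:Yn1} (or of Theorem~\ref{Thm:AnXn} combined with Corollary~\ref{Prop:vector_field}), a step the paper leaves implicit. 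You instead differentiate $h_n^a$ along the exact suspension flow and observe that $\{h_n^a,h\}=\epsilon^{2n}\{h_n^a,h_n^b\}$, which yields the same $O(\epsilon^{2n+1})$ bound by a single integration and is arguably more self-contained, since it uses only the hypotheses of Proposition~\ref{Pro:hn} and the uniform $\mathcal{C}^{2n+1}$ bounds directly. Your care in enlarging $D_0$ to a compact $D_0'$ containing $F_\epsilon(D_0)$, and in keeping the suspension trajectory inside a compact subset of $D$ for $\tau\in[0,1]$, addresses details the paper's two-line proof glosses over. Both routes are valid and give the same final estimate, dominated by the $O(\epsilon^{2n})$ contributions from Proposition~\ref{Pro:hn}.
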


\begin{proof}
Since $F_\epsilon-\Phi^\epsilon_{A_n}=O(|\epsilon|^{2n+1})$, we get
$$
h_n(F_\epsilon(x),\epsilon)=h_n^a(F_\epsilon(x),\epsilon)+O(\epsilon^{2n})=h_n^a(\Phi^\epsilon_{A_n}(x),\epsilon)+O(\epsilon^{2n}).
$$                                                                                                                                                                                                                                                                                                                                                                             
The flow of $A_n$ preserves $h_n^a$, in particular $h_n^a(\Phi^\epsilon_{A_n}(x),\epsilon)=h_n^a(x,\epsilon)$, and the desired estimate follows.
\end{proof}

This corollary implies that  $h_n$ is an adiabatic invariant of the map $F_\epsilon$,
as it is approximately preserved for $\epsilon^{-2n}$ iterations of the map,
provided the corresponding segment of the trajectory remains inside $D$.
We expect that in the case of an analytic map, the adiabatic invariant
$h_{n(\epsilon)}$ is preserved over exponentially long times
where $n(\epsilon)\sim |\epsilon|^{-1}$.

\medskip

We note that the function $h_n$ can be as smooth as the
interpolating vector field (provided the paths $\gamma(x_b,x)$ are 
chosen appropriately).  If the domain $D$ is not simply-connected, 
the function $h_n$ can become multivalued if it
does not return to the original value when $x$ makes a round  along a closed
non-contractible loop.

%%%%%%%%%%%%%%%%%%%%%%%%%%%%%%%%%%%%%%%%%%%%%%%%%%%%%%%%%%%%%%%%%%%%%%5

\section{Numerical study of  dynamics using interpolating vector fields\label{Se:numerics}}

\subsection{Two-dimensional area-preserving maps: the Chirikov standard map} \label{APM}

Our first example is the  Chirikov standard map  \cite{Chi79} written in the
form
\begin{equation}\label{stdmap}
M_{\epsilon}:(x,y) \mapsto (\bar{x},\bar{y})= (x+\epsilon \bar{y}, y-\epsilon \sin(x)),
\end{equation}
where $\epsilon $ is a real parameter.  We consider this map on the cylinder
$\mathbb{S}^1 \times \mathbb{R}$.  On every bounded subset of the cylinder, the
map (\ref{stdmap}) is close to identity provided $|\epsilon|$ is small enough.

In our first numerical experiment we take $\epsilon=0.1$,  $n=5$  and  study  the
interpolating vector field $X_n$ defined by equation \eqref{Eq:interpolVF}. We
choose some initial conditions on the cylinder and compare their iterates under
the original map $M_{\epsilon}$ with the iterates under the time-$\epsilon$ map
$\Phi^\epsilon_{X_n}$ corresponding to the interpolating vector field $X_n$.
Fig.~\ref{stdvscamp_0p1}  shows the first  $10^3$ iterates for  200 initial
conditions. Both pictures use the same set of initial conditions.  The
interpolating vector field is integrated up to $\tau=10^3$ with the help of a
Runge-Kutta-Felberg 7-8 integrator with variable step size. There is no
visually perceptible difference between the two images.

\begin{figure}[htb]
\begin{center}
	\includegraphics[width=7cm]{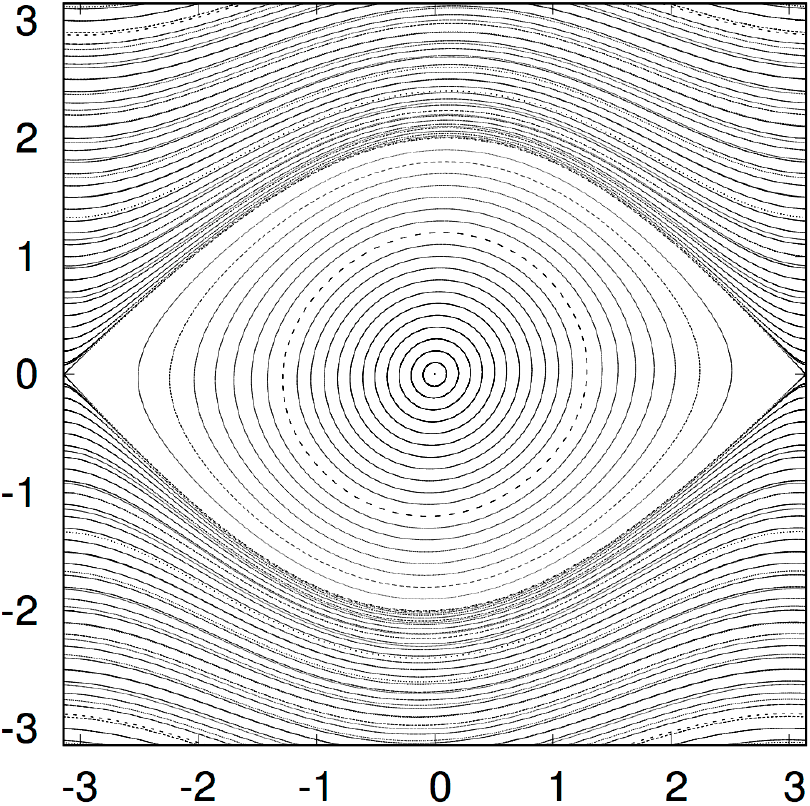}\kern1cm
	\includegraphics[width=7cm]{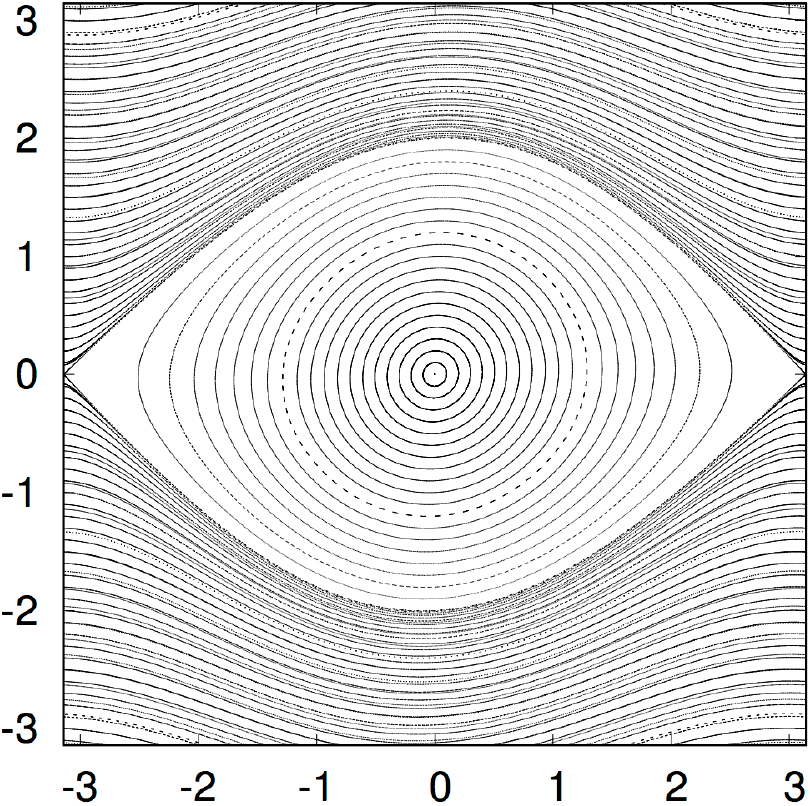} 
\end{center}
	\caption{Iterates of the standard map $M_{\epsilon}$ (left) and  of the 
		time-$\epsilon$ map $\Phi^\epsilon_{X_n}$  associated to the interpolating vector field (with $n=5$) (right). 
		In both cases	$\epsilon=0.1$. No visual differences are detected in the plots. }
	\label{stdvscamp_0p1}
\end{figure}

In order to quantitatively describe the difference between the map and its
interpolating flow, we consider interpolating vector fields $X_n$ for
$n=5,10,15$ and $20$.  Fig.~\ref{stdvscamp_0p1_error} shows level plots of
 $| \Phi^{\epsilon}_{X_n}(x_0)- M_{\epsilon}(x_0)|$ computed for
$5 \times 10^5$ initial conditions selected on a uniform mesh in $[-\pi,\pi] \times [-2 \pi,2\pi]$.  We
clearly see that the error vanishes at the fixed points of the map and that the
error decreases as $n$ increases. Of course,  the interpolation error will
eventually grow with $n$ due to Runge oscillations in interpolation.

\begin{figure}[htb]
	\begin{center}
	\includegraphics[width=7cm]{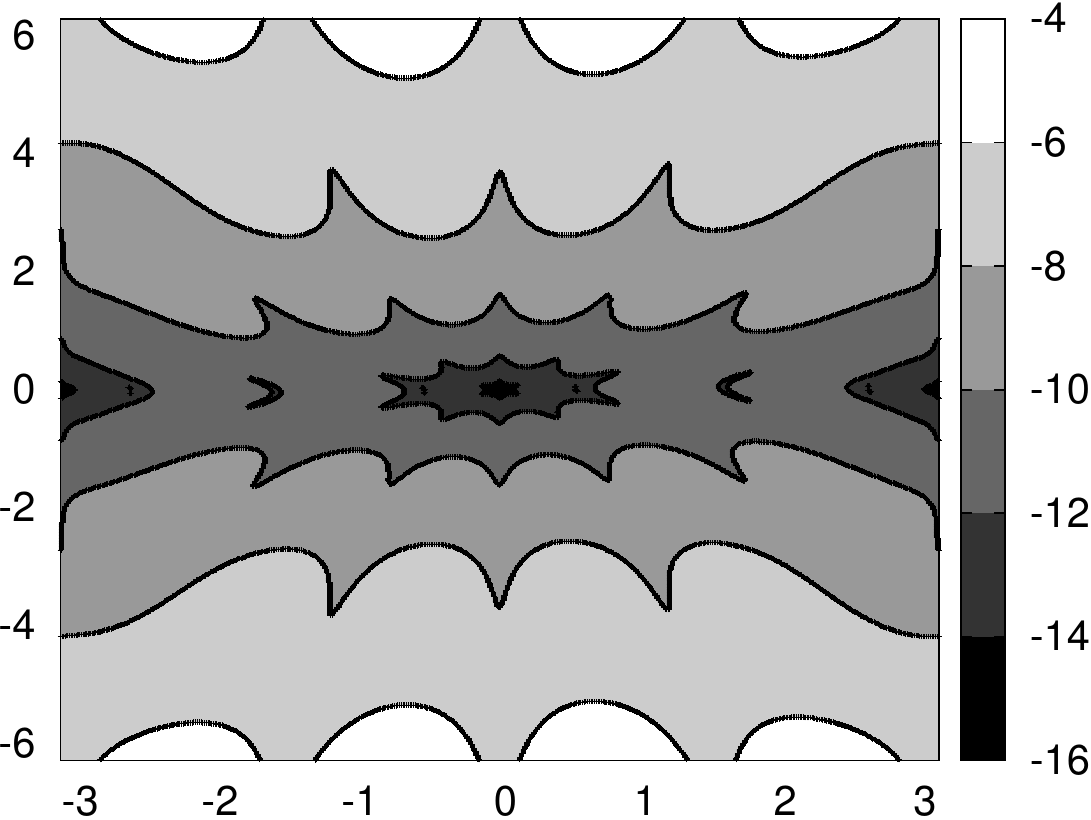} \kern0.5cm
	\includegraphics[width=7cm]{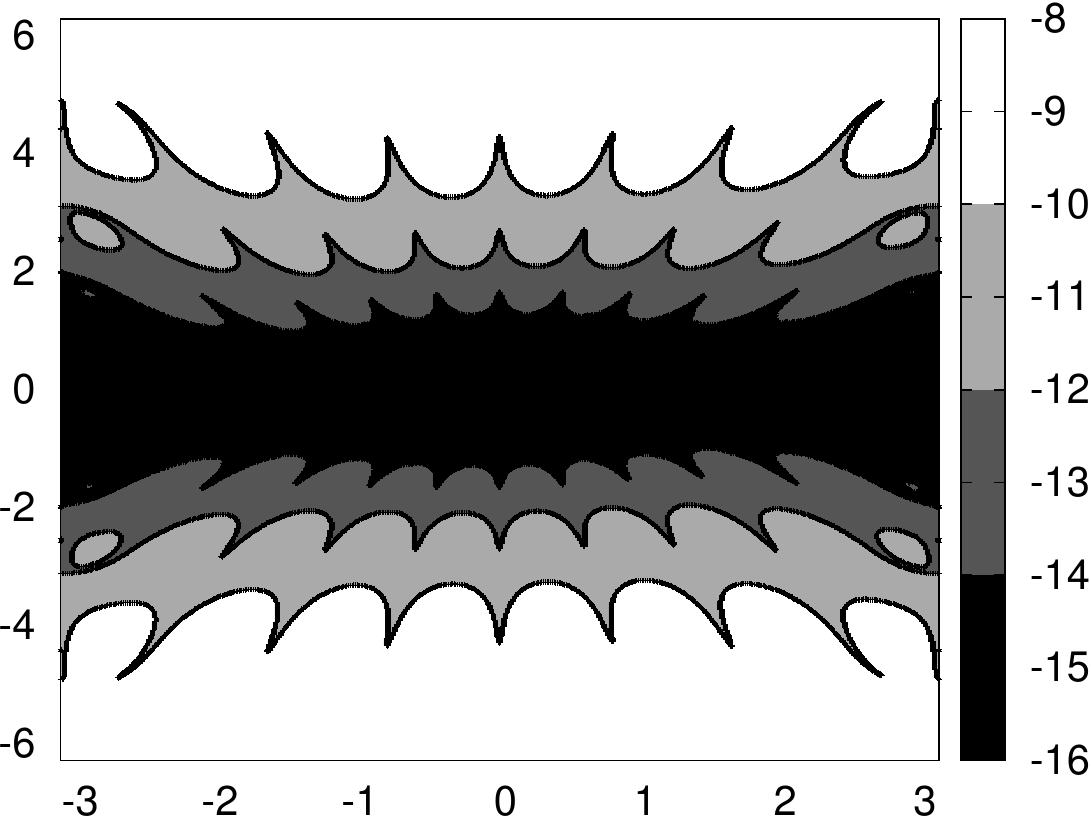}  \\[-5pt]
	\hspace{-0.5cm} $n=5$\hspace{5cm} $n=10$ \\[10pt]
	\includegraphics[width=7cm]{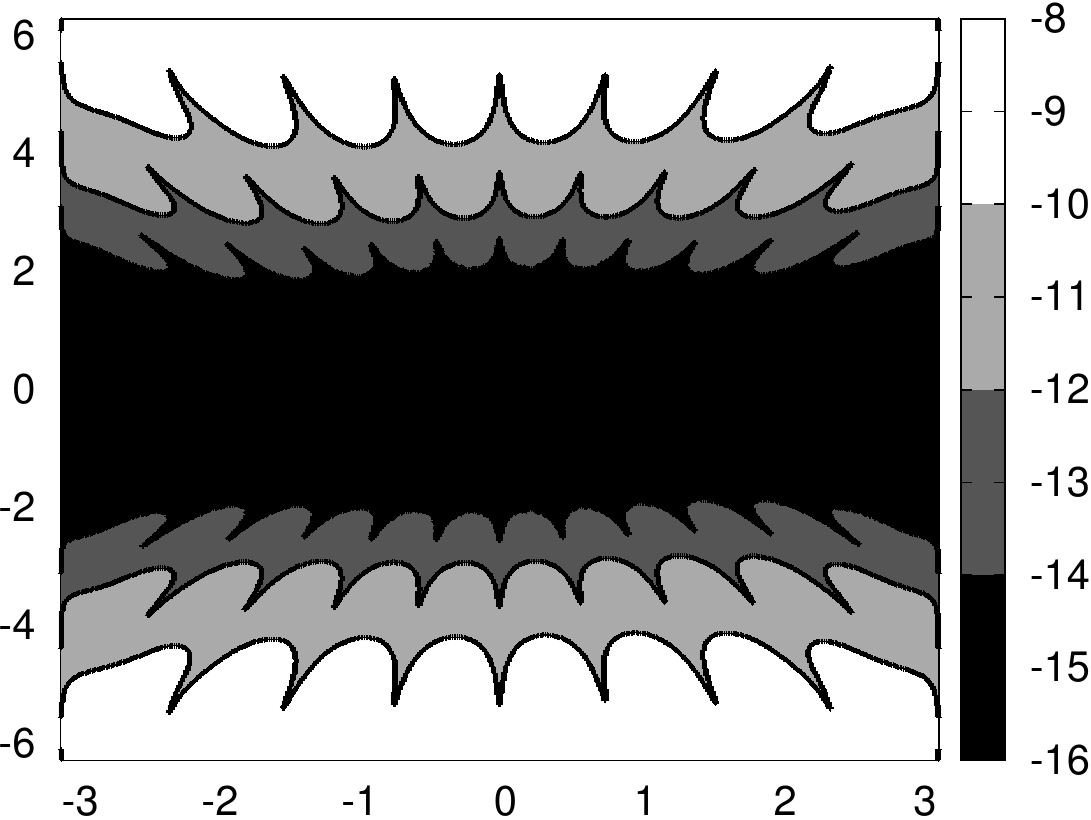} \kern0.5cm
\includegraphics[width=7cm]{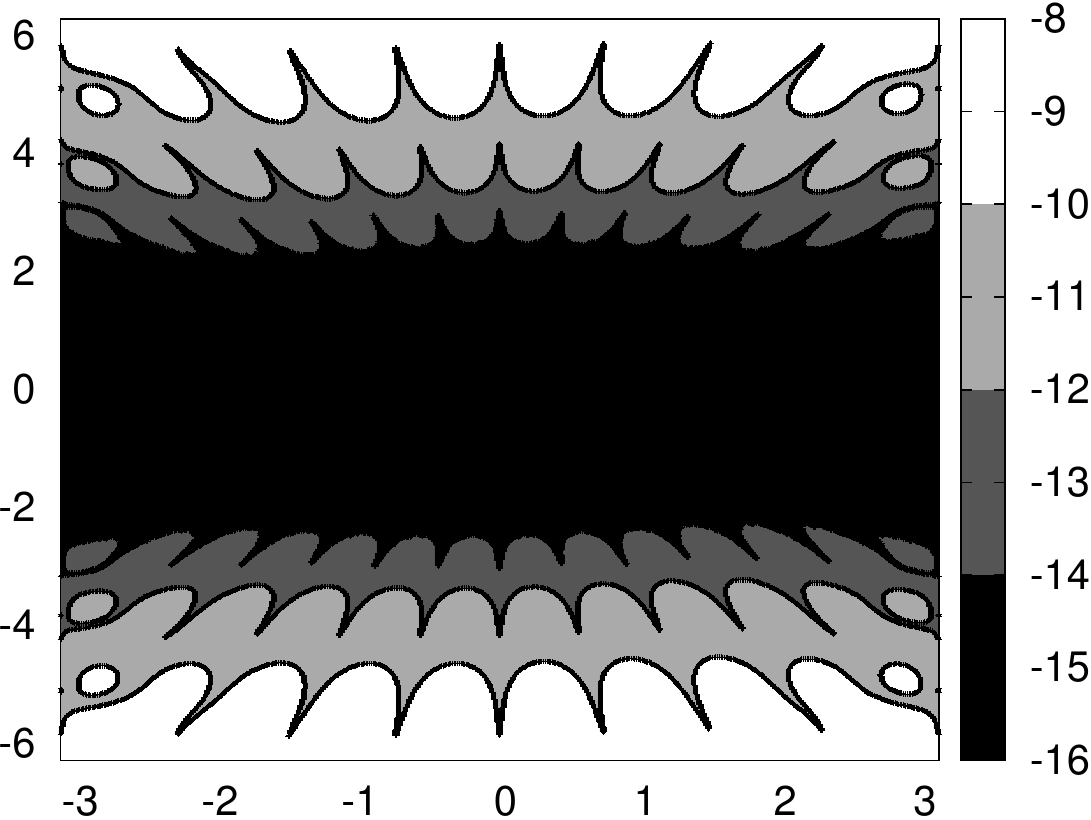}\\[-5pt]
	\hspace{-0.75cm} $n=15$\hspace{4.75cm} $n=20$
	\end{center}
	\caption{$\epsilon=0.1$. Level plots of 
		$\log_{10}|\Phi^\epsilon_{X_n}(x_0)-M_{\epsilon}(x_0)|$ for $x_0\in[-\pi,\pi] \times [-2 \pi, 2 \pi]$,
		  and four different values of $n$.
                }
	\label{stdvscamp_0p1_error}
\end{figure}

\begin{remark} \label{remark_ham2d}
For the standard map \eqref{stdmap}, the
interpolating flow $X_n$, for any $n$, defines a one degree of freedom Hamiltonian system (with
a non-standard symplectic form). This follows from the fact that the map is
reversible, hence so is the interpolating vector field, see Proposition~\ref{Prop:vector_field}.
The reversibility of $X_n$ forces the phase space to be foliated by periodic
orbits, see Fig.~\ref{stdvscamp_0p1} right and also the right plots of
Fig.~\ref{stdvscamp0p5} as illustrations. A reversible 2-dimensional system
having a foliation of periodic orbits is  Hamiltonian (possibly with a
non-standard symplectic  form).
\end{remark}

%eps0p5
To visually inspect the differences between the orbits for the map and the
interpolating flow, we increase the parameter up to $\epsilon=0.5$ and show the
results in Fig.~\ref{stdvscamp0p5}.  The left plots represent the iterates of
the standard map $M_{\epsilon}$, while the right ones correspond to the
time-$\epsilon$ map $\Phi^\epsilon_{X_n}$ for $n=10$.  We recall that $X_n$ defines an integrable vector
field, see Remark~\ref{remark_ham2d}. The bottom raw of Fig.~\ref{stdvscamp0p5}
shows  magnifications of a part of the pictures from  the top row: we can
clearly see the differences between the phase portraits when magnifying a strip
located near a  chain of resonant islands of the map. 

\begin{figure}[htb]
\begin{center}
\includegraphics[width=6cm]{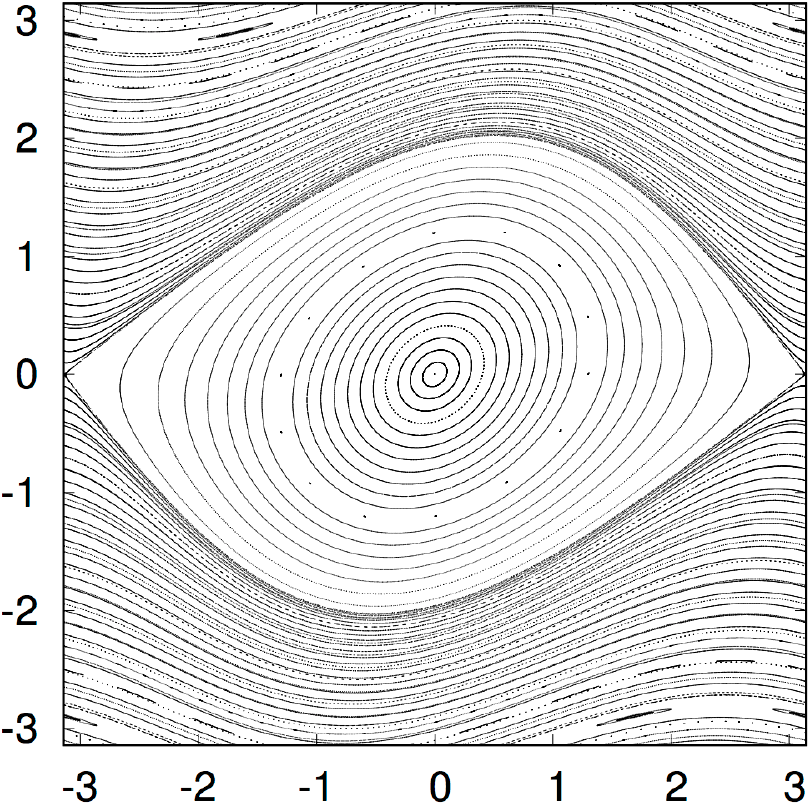} \kern1cm
\includegraphics[width=6cm]{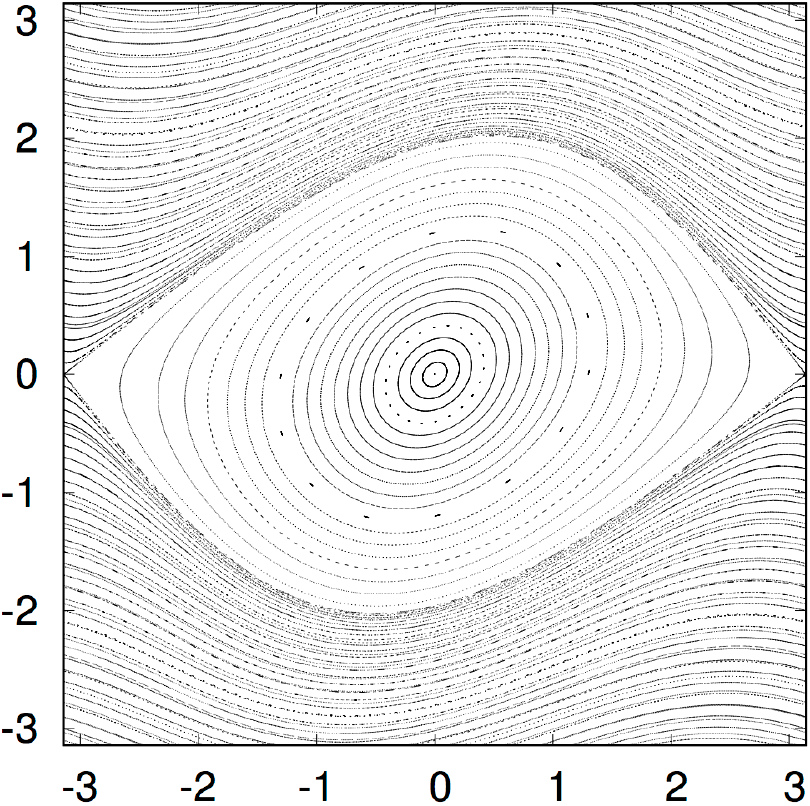} \\[15pt]
\includegraphics[width=6cm]{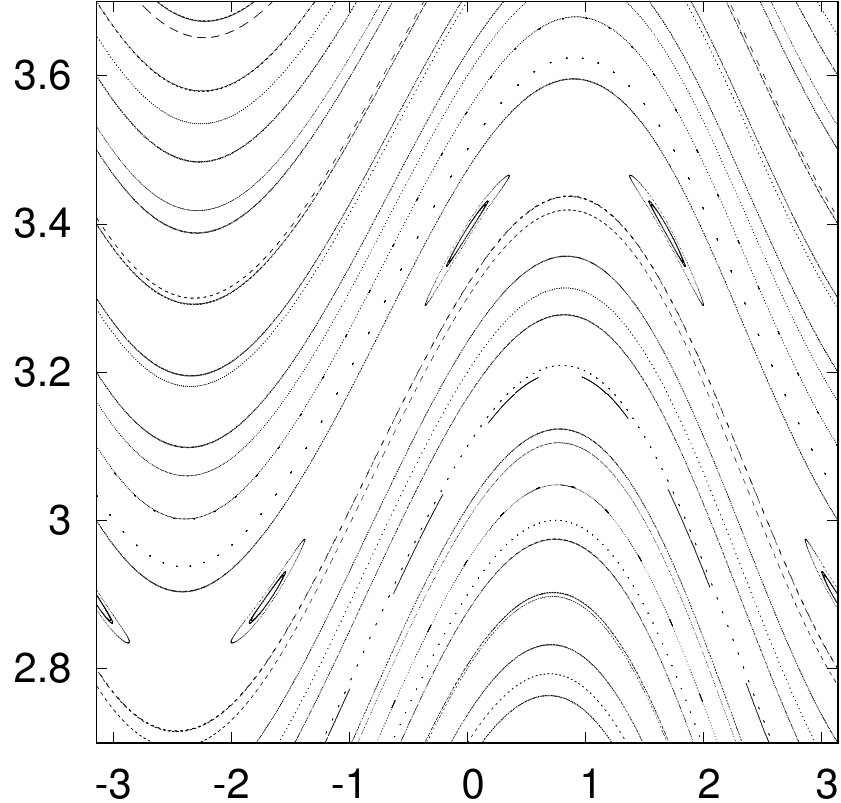} \kern1cm
\includegraphics[width=6cm]{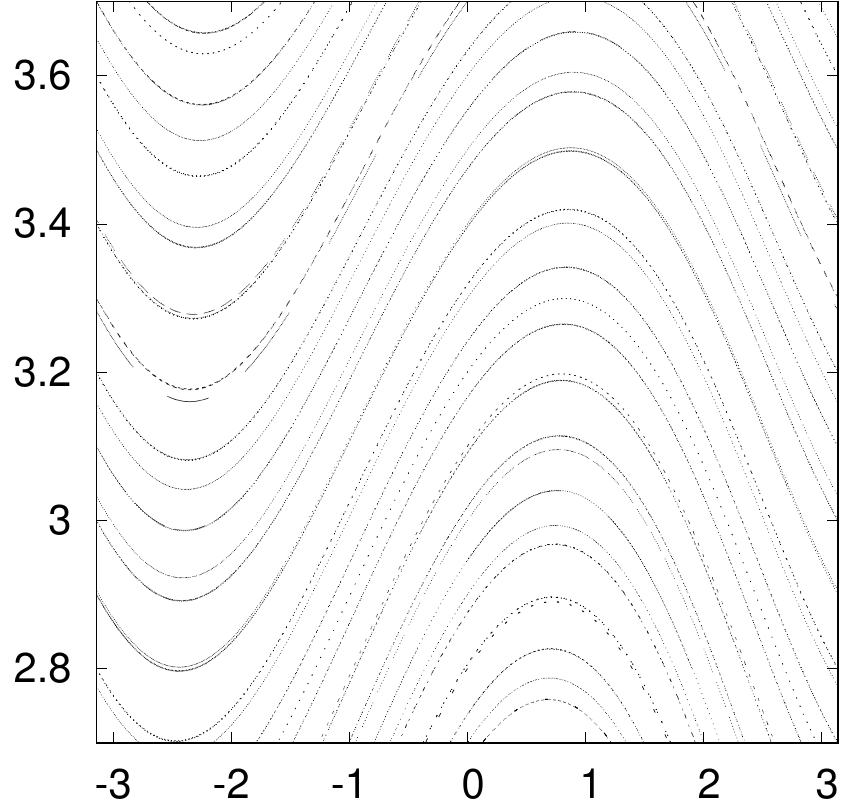} 
\end{center}
\caption{$\epsilon=0.5$. Left: Iterates of the standard map $M_{\epsilon}$. Right: Iterates of the
time-$\epsilon$ map $\Phi^\epsilon_{X_n}$ (with $n=10$) associated to the interpolating vector field. }
\label{stdvscamp0p5}
\end{figure}

% Adiabatic invariant
To study  the adiabatic invariants defined by the equation \eqref{Eq:hn},
we fix a base point $x_b\in\mathbb R^{2d}$ ($d=1$ in this section but we will also use $d=2$ later) and
 consider paths represented by the function $\gamma(s,x_b,x)=x_b+s v$ where $v=x-x_b$ and $0 \leq s \leq1$.
Then the integral~\eqref{Eq:hn} takes the form
\begin{equation} \label{energy_definition}
h_n(x,\epsilon)=\int_{0}^{1} \sum_{i=1}^d
\left(X_n^i (\gamma(s,x_0,x),\epsilon) v_{i+d}-
X_n^{i+d}(\gamma(s,x_0,x),\epsilon)v_{i}
\right)  ds
\end{equation}
where $v_i$ are components of the vector $v$.  In numerical experiments, we
evaluate  this  integral  using a trapezoidal rule combined with the Romberg
extrapolation scheme. We accept a numerical estimate of the integral value if
the difference between two consecutive approximations of the Romberg scheme is
less than $10^{-8}$.  We use this rule to evaluate the adiabatic invariants in
all examples presented in the paper unless otherwise stated. In principle, this
method can be used to achieve higher precision, however for the visualization
purposes higher precision is not needed.

To investigate the preservation of the adiabatic invariants as a function of
$n$ and~$\epsilon$ under iterates of $M_{\epsilon}$,  we select
$10^4$ initial conditions which form a uniform mesh in $ [-\pi,\pi]^2$ and for every point  compute
$\Delta h_n(x_0)=|h_n(M_{\epsilon}(x_0),\epsilon) - h_n(x_0,\epsilon)|$
and use the corresponding maximum value to estimate the supremum norm  $\|\Delta h_n\|$.

%Fig.~\ref{hnll} shows a  plot  of $\Delta h_5$ as a function of
%$\epsilon$ (both in decimal logarithm scale) for every point on the mesh. 
%We also added  two straight lines with slope 11 to show that $\Delta h_5 $
%scales as $\epsilon^{11}$, which confirms that  the upper bound 
% from Corollary~\ref{Prop:vector_field} is not too far from being optimal.
%For smaller values of $\epsilon $,   we see that the plot becomes almost horizontal.
%This behaviour is  expected  as 
% $\Delta h_5$ for these values of $\epsilon$ is  smaller than
%the error in its numerical evaluation (around $10^{-12}$).
%%Indeed, Fig.~\ref{hnll}(b) shows the results of the computations repeated
%%with increased accuracy (up to   $10^{-12}$).
%The numerical results confirm that the quantity $h_5(x,\epsilon)$ is an
%adiabatic invariant for the map $M_{\epsilon}$.
%
%\begin{figure}[h]
%\begin{center}
%%\includegraphics[width=7cm]{./figs/hnll} \hskip20pt
%\includegraphics[width=7cm]{./figs/hnllr}
%%\\(a)\hskip 7cm (b)
%\end{center}
%\caption{Plot of  $\log_{10}(\Delta h_5)$ as a function of $\log_{10}(\epsilon)$. 
%	The dots represent  different  initial conditions $x_0 \in [-\pi,\pi]^2$. 
%	 The lines
%illustrate how this quantity varies for some random points. The straight lines
%above and below the displayed results correspond to $y=11x+4$ and $y=11x-8$.
%% In
%%the left plot we have computed $h_5$ with precision $10^{-8}$ while 
%%the right one uses precision $10^{-12}$.
%}
%\label{hnll}
%\end{figure}

\begin{figure}[h]
\begin{center}
\includegraphics[width=7cm]{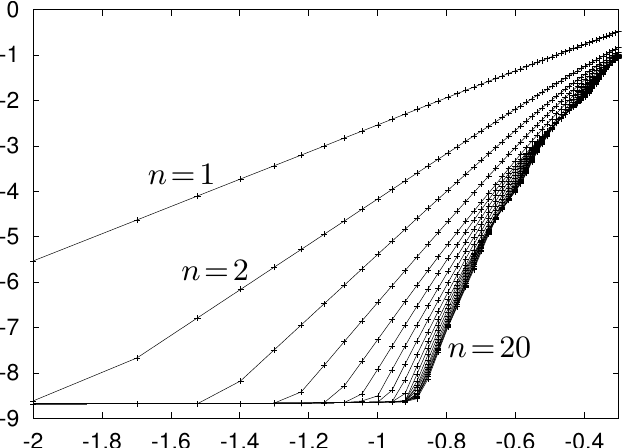} 
%\hskip1cm
%\includegraphics[width=7cm]{./figs/hn_eps} 
\hskip1cm
\includegraphics[width=7cm]{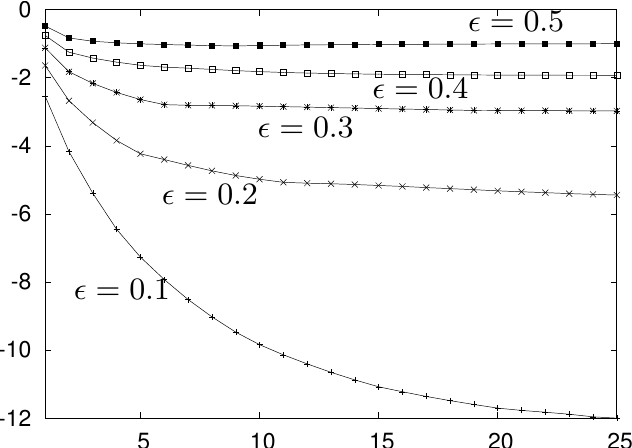} \\
(a)\hskip 7cm(b)
\end{center}
\caption{Plots of  $\log_{10}\|\Delta h_n\|$: (a)  as a function of $\log_{10}\epsilon$ for $1\le n\le 20$, and
	(b)  as a function of $n$ for $\epsilon=0.1(0.1)0.5$.
}
\label{hn_eps}
\end{figure}

Fig.~\ref{hn_eps}(a)  shows  plots of $\|\Delta
h_n\|$ as a function of $\log_{10}\epsilon$ for every $1 \leq n \leq 20$. 
Each line of the plot corresponds to a
fixed value of $n$ and is obtained by joining  $50$ points  with  different values  of $\epsilon \in [0,1/2]$.
 In this plot $n=1$  corresponds to the largest  values of $\|\Delta h_n\|$.
This plot  confirms that  the upper bound 
of  Corollary~\ref{Co:Deltahn} is not too far from being optimal
as the lines on the plot are approximately linear with the slope about $2n+1$ 
(till they reach the levelled floor located just below $10^{-8}$,
which is determined by our choice of the accuracy  in evaluation  of $h_n$).
For $\epsilon\approx0.15$ the values of $\|\Delta h_n\|$
 are monotone decreasing for $1\le n\le 20$.
 On the other hand,  we see that  $\|\Delta h_n\|$
 is not necessarily monotone  in  $n$ for larger values of $\epsilon$ 
 as the lines have intersection. Moreover, the plot indicates that for a fixed $\epsilon$
the value of $\|\Delta h_n\|$  cannot be moved below a certain threshold  by increasing the value of $n$
  (see  Fig~\ref{hn_eps}(b)). Therefore  for a fixed $\epsilon$ we can find an optimal value
  of $n$ which corresponds to a point after which the  adiabatic invariant is not improved when $n$ is increased.
  The existence of this threshold can be attributed to the non-integrability of the map $M_\epsilon$.
  A similar phenomena are observed in the study of  optimal truncation of asymptotic series.

% Aqui 2-periodiques i superiors
Finally, we note that the methods of  this section can be used to study 
some maps which are not a priori near identity but 
have an iterate which is near identity on some subset of the phase space.
In particular, this situation can arise in a study of a near integrable system near a
multiple resonance.  For example, if
$\epsilon$ is not small, the map $M_{\epsilon}$ is no longer close to identity.
Nevertheless, in a neighbourhood of a $q$-periodic point, the map
$M_{\epsilon}^q$  becomes close to identity.  Let us illustrate how the
interpolating vector field can be adapted to study the dynamics near a
$q$-resonant chain of islands.  Let  $\epsilon=0.5$. We established (see
Fig.~\ref{stdvscamp0p5}) that the interpolating vector field provides an
accurate approximation of the dynamics of $M_\epsilon$ for $y\in[-3,3]$. Now we
consider larger values of $y$ and  investigate what happens with the
approximation.  We take initial points with $x=\pi$ and iterate them. 
Comparing  Fig~\ref{stdvscamp2}~(a) and~(b)
which  represent the dynamics of $M_\epsilon$ and the interpolating vector field $X_n$ for
$n=5$, respectively, we see that the interpolating vector field does not correctly capture the
dynamics around the 2-periodic  chain of islands.
On the other hand,
in this part of the phase space 
the dynamics of $M_{\epsilon}^2$ 
is sufficiently close to identity so that the interpolating vector field $X_{2,n}$,
computed from iterates of $M_{\epsilon}^2$, provides a good approximation of
the dynamics as can be seen in Fig~\ref{stdvscamp2}(c).
Note, that only one of the two islands can be seen due to the choice of initial conditions.

\begin{figure}[htb]
\begin{center}
\includegraphics[width=5.2cm]{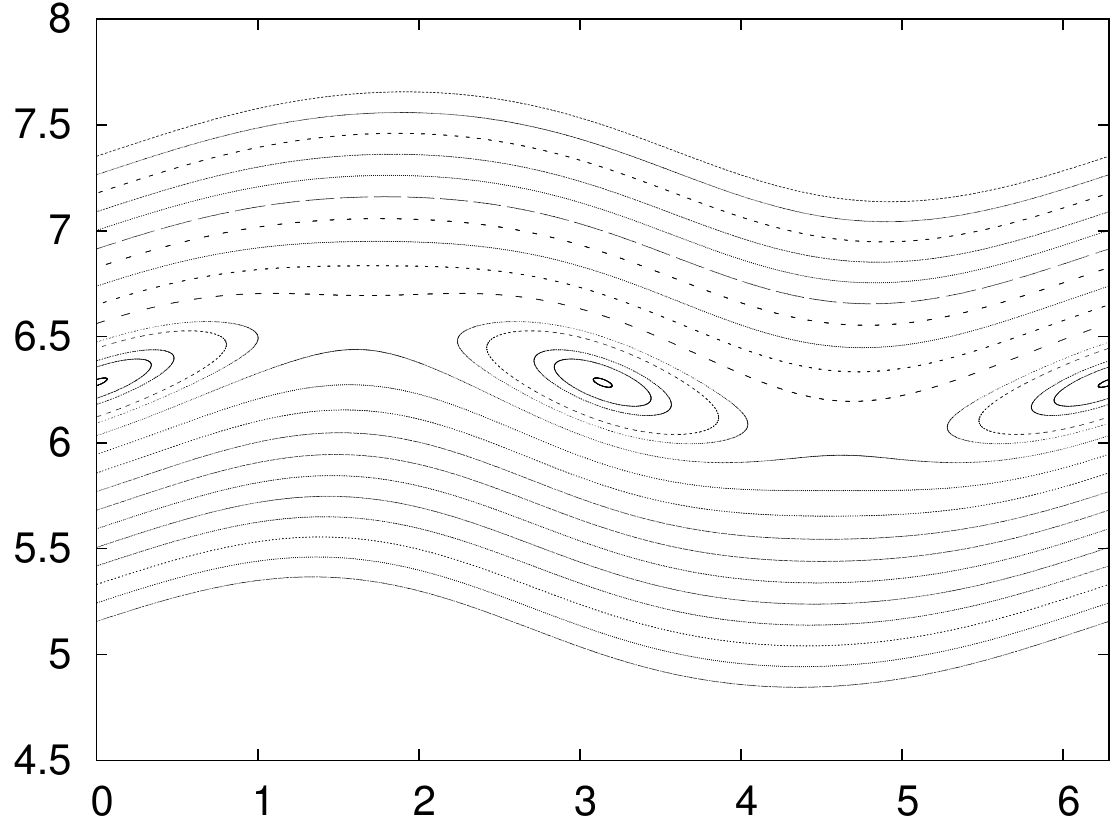}~~
\includegraphics[width=5.2cm]{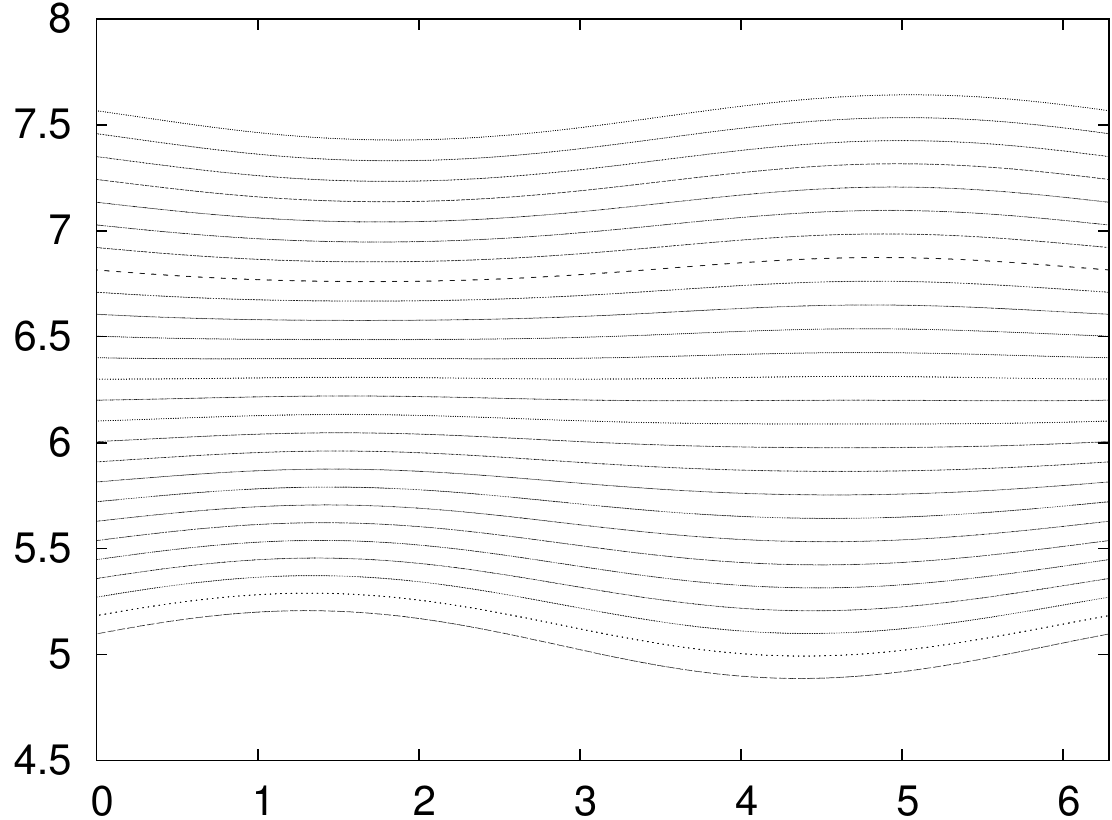}~~
\includegraphics[width=5.2cm]{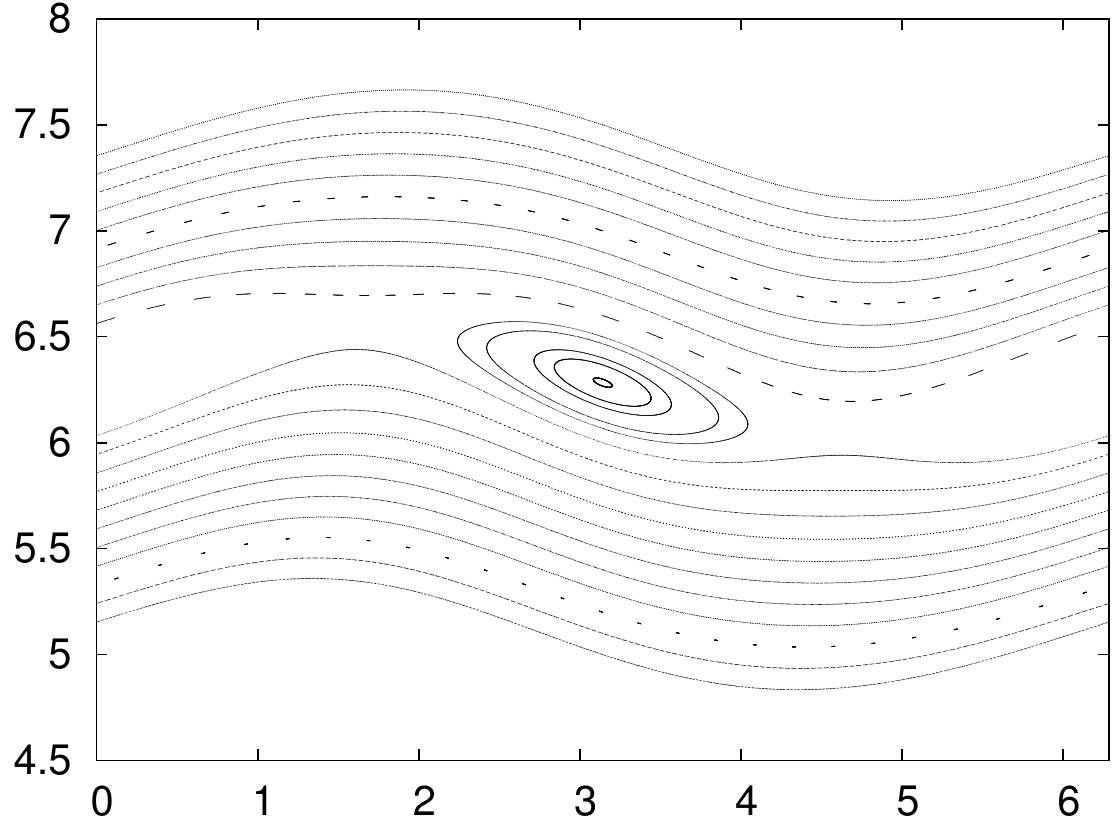}\\
(a)\hskip 5cm (b)\hskip 5cm (c)
%\\[12pt]
%\includegraphics[width=6cm]{./figs/Mvsf_0p5_2d_sm}\hskip1cm
%\includegraphics[width=6cm]{./figs/Mvsf_0p5_2d_vf} \\
%(c)\hskip 7cm (d)
\end{center}
\caption{$\epsilon=0.5$. Initial points taken on $x=\pi$, $n=5$. (a)  Iterates of
the standard map $M_{\epsilon}$, (b) Iterates of 
 $\Phi^\epsilon_{X_n}$, and (c)  Iterates of the map
 $\Phi^\epsilon_{X_{2,n}}$  associated with the interpolating
vector field for  $M_{\epsilon}^2$. }
\label{stdvscamp2}
\end{figure}

Fig.~\ref{stdvscamp3} shows similar results for $q=3$. Here we  take initial points with $x=0$,
hence only one of the 3-periodic islands appears in the picture. The interpolating vector
field $X_n^3$ computed for $M_\epsilon^3$  accurately describes  the dynamics in a narrow zone around the resonant
3-periodic island. Note that the 5-periodic island observed in Fig.~\ref{stdvscamp3}(a)
is not present in Fig.~\ref{stdvscamp3}(b).

\begin{figure}[htb]
\begin{center}
\includegraphics[width=7cm]{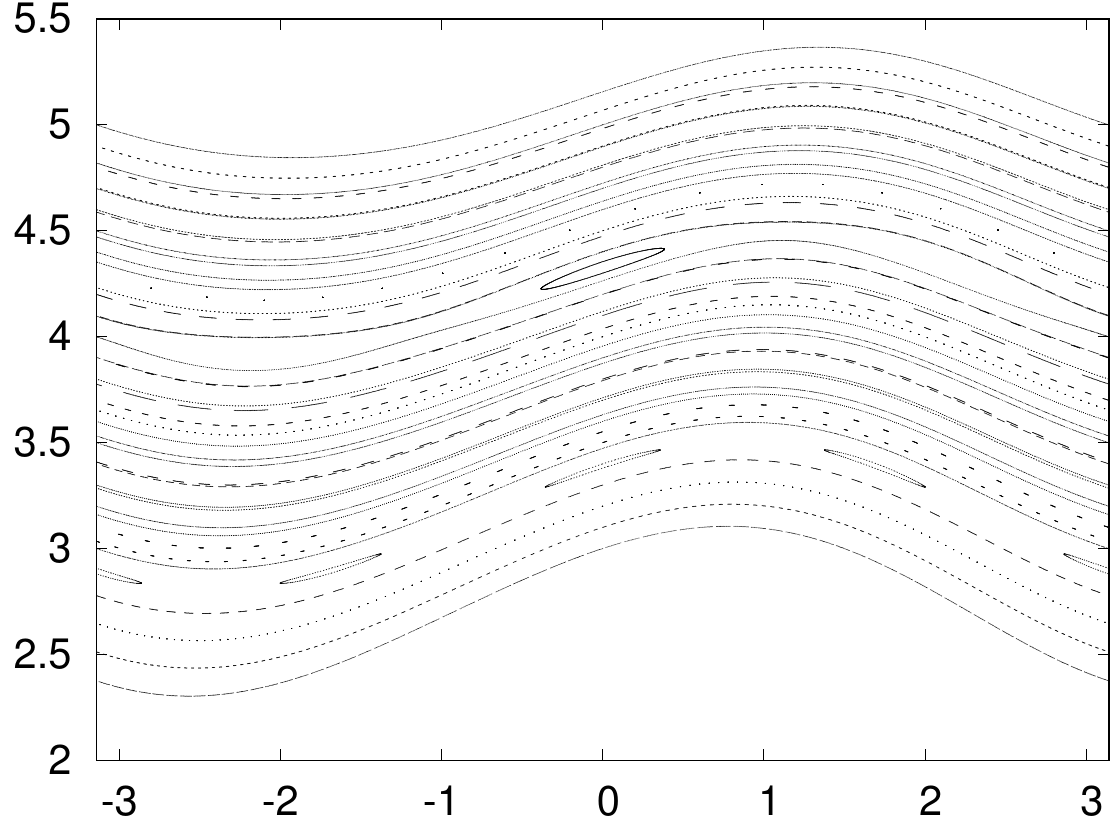}\kern1cm
\includegraphics[width=7cm]{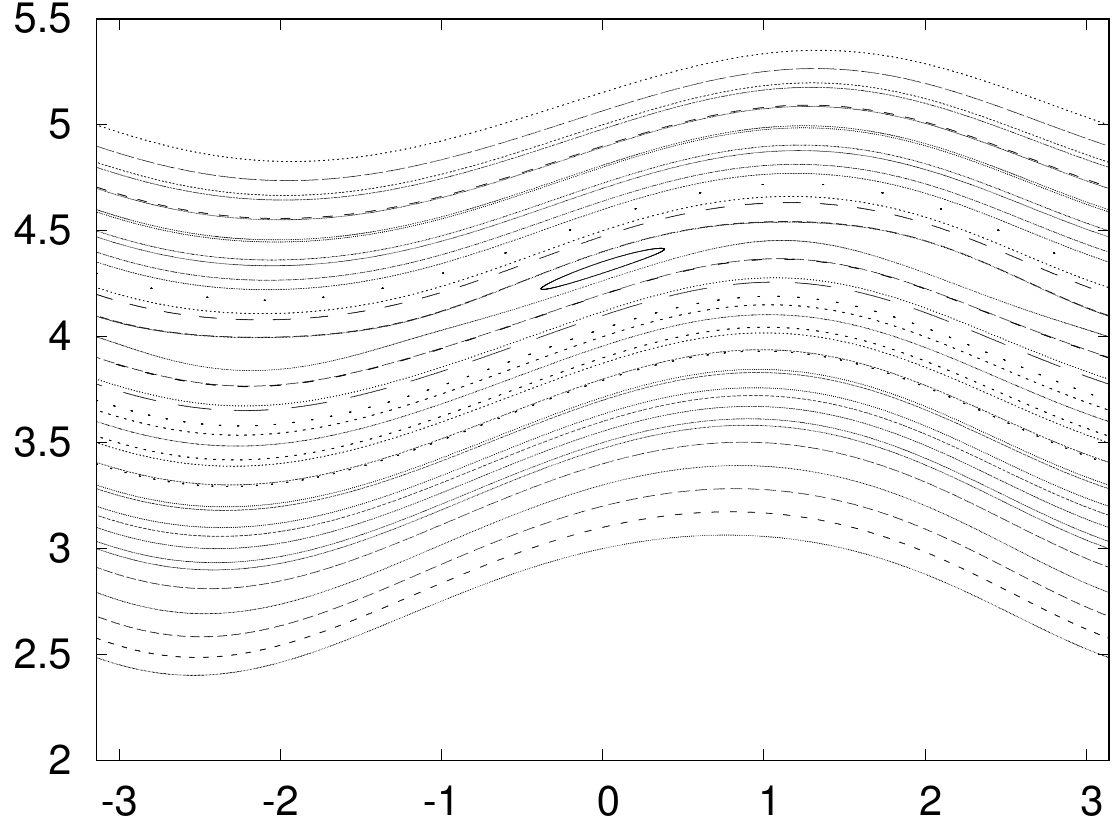}\\
(a)\hskip7cm(b)
\end{center}
\caption{$\epsilon=0.5$.  Initial points taken on $x=0$, $n=5$. 
(a) Iterates of the standard map,  and
(b) iterates 	of  $\Phi^\epsilon_{X_{3,n}}$  associated to the interpolating vector
field for  $M_{\epsilon}^3$. }
\label{stdvscamp3}
\end{figure}

\subsection{Exploring higher dimensional phase spaces: Poincar\'e sections for maps} \label{Poincare}

If the dimension of the phase space $m\ge3$, the visualization of the
dynamics becomes more difficult.  In the case of a system with continuous time, 
a Poincar\'e section provides a convenient tool to reduce the dimension:  a
trajectory  is represented by its intersections with a codimension one surface.  In the case of discrete
time,  the reduction of the 
dimension cannot be performed in a similar way.  A typical solution to this
problem is either to plot a projection of the trajectory on a subset  of lower
dimension, or to use the method of slices \cite{RLBK2014}, i.e.  to plot only a part of the
trajectory which consists of points from a narrow strip near  a codimension one
surface (called a slice).  In the last case,  the points are also projected on the surface
to reduce the dimension.

The interpolating vector fields provide a new tool for visualization of
dynamics which is especially effective in dimensions three and four.  Suppose
that $g:\mathbb R^m\to\mathbb R$ is a smooth function such that its zero set
$\Sigma=\{x\in\mathbb R^m: g(x)=0\}$ is a smooth hyper-surface of codimension
one.  Taking an initial condition $x_0\in D$ we compute the points $x_{k+1}=F_\epsilon(x_{k})$ recursively. 
The  surface $\Sigma$ locally divides  the space. So we can  look for consecutive  points of the trajectory  which
are separated by  $\Sigma$, i.e.,
\begin{equation}\label{Eq:gg}
g(x_k)g(x_{k+1})\le 0.
\end{equation}
Suppose that the limit vector field $G_0$ is transversal to $\Sigma$ 
(at least in a neighbourhood of the intersection of $\Sigma$  with
the straight segment with endpoints at  $x_k$ and $x_{k+1}$).
Then, for $\epsilon$ small enough, there is a unique $t_k\in[0,\epsilon]$ such
that $g(\Phi_{X_n}^{t_k}(x_k))=0$, and we  plot the point 
\begin{equation}\label{Eq:yk}
y_k=\Phi_{X_n}^{t_k}(x_k)
\end{equation}
instead of $x_k$. Obviously, $y_k\in\Sigma$ and the trajectory is represented on
a manifold of lower dimension. The point $y_k$ is $O(\epsilon)$ close to $x_k$
and the error does not accumulate when $k$ grows as the construction of $y_k$ does not
affect the computation of the trajectory $(x_k)_{k\ge0}$.

\subsection{Four-dimensional symplectic maps: the Froechl\'e-like map} 

 We  apply the construction of the previous section  to a 4-dimensional symplectic map
  and show that the method reveals interesting details of the dynamics.
We  consider the Froechl\'e-like map
\begin{equation} \label{4Dmap}
T_{\epsilon}: \left( \begin{array}{c}
           \psi_1 \\
           \psi_2 \\
            J_1 \\
            J_2 
          \end{array} \right) 
\mapsto
\left( \begin{array}{c}
        \bar{\psi}_1 \\
        \bar{\psi}_2 \\
        \bar{J}_1 \\
        \bar{J}_2 
      \end{array} \right) =
\left( \begin{array}{c}
       \psi_1 + \epsilon (a_1\bar{J}_1 + a_2 \bar{J}_2 ) \\
       \psi_2 + \epsilon (a_2 \bar{J}_1 + a_3 \bar{J}_2 ) \\
       J_1 - \epsilon \sin(\psi_1) \\
       J_2 - \epsilon \eta \sin(\psi_2) 
      \end{array} \right),
\end{equation}
where $a_1,a_2,a_3,\eta,\epsilon$ are real parameters.  
 The map $T_\epsilon$ is a symplectic diffeomorphism 
of the cylinder $M=\mathbb T^2\times\mathbb R^2$.
It was introduced in~\cite{GSV2013} to  model  the dynamics
near a double resonance in a near-integrable Hamiltonian system with three
degrees of freedom. In our numerical experiments we  use
\begin{equation} \label{parameters}
a_1=1, \qquad a_2=1/2, \qquad a_3=5/4,\qquad \eta=1/2.
\end{equation}
The quadratic form $a_1 J_1^2+2a_2 J_1J_2+a_3 J_2^2$ is positive definite,
 since  $a_3-a_2^2>0$. The map (\ref{4Dmap}) has four fixed points. 
 If $\epsilon$ is positive and not too large, the origin $p_1 =(0,0,0,0)$ is elliptic-elliptic,
$p_2=(0,\pi,0,0)$ and $p_3=(\pi, 0, 0,0)$ are
hyperbolic-elliptic and $p_4=(\pi,\pi,0,0)$ is hyperbolic-hyperbolic.

\subsubsection*{A  Poincar\'e section for $T_{\epsilon}$}

Since the map \eqref{4Dmap} is symplectic, its limit flow is Hamiltonian.
It is easy to find  the corresponding Hamiltonian function
explicitly:
\begin{equation}\label{Hamiltonian}
h_0(\psi_1,\psi_2,J_1,J_2) =a_1 \frac{J_1^2}{2} + a_2 J_1 J_2 + a_3 \frac{J_2^2}{2}  
- \cos(\psi_1)- \eta \cos(\psi_2).
\end{equation}
This Hamiltonian defines a non-integrable Hamiltonian system with two degrees
of freedom.  The Hamiltonian $h_0$ has four critical points which coincide with
the fixed points of $T_\epsilon$.  Levels of constant energy,
$M_E^0=\{x:h_0(x)=E\}$, are smooth hyper-surfaces for every regular value of
$h_0$. It is natural to study the dynamics of the limit system restricted on each energy
level separately as the Hamiltonian function $h_0$ remains constant along
trajectories of the limit flow.
Let $\Sigma$ be the 3-dimensional hyper-surface  defined by the equality $\psi_1=\psi_2$.
Note that we do not call $\Sigma$ a ``hyper-plane" because we  treat $\psi_1$ and $\psi_2$
as angular variables. The limit vector field is transversal to $\Sigma$ except for
 points which satisfy the equation  $(a_1-a_2) J_1=(a_3-a_2) J_2$ where the vector field is tangent to $\Sigma$. 

The intersection $\Sigma^0_{E}=\Sigma \cap M_E^0$ defines a Poincar\'e section for the limit flow. 
 Outside a neighbourhood of the tangencies, the first return map of the limit flow
 defines an area-preserving map on $\Sigma^0_E$.  The dynamics of the limit
Hamiltonian system are described  by a collection of the  Poincar\'e sections for
different values of~$E$.
Since the quadratic form $a_1 \frac{J_1^2}{2} + a_2 J_1 J_2 + a_3 \frac{J_2^2}{2}  $ is positive
definite,  $M_E^0$ is diffeomorphic to a three dimensional torus $\mathbb T^3$ 
for every  $E>h_0(p_4)=1+\eta$. Then $\Sigma_E^0$ is diffeomorphic to a two dimensional
torus $\mathbb T^2$. It is convenient to use  $\psi=\psi_1=\psi_2$
and $\phi=\arg(J_1+i J_2)$ as  coordinates  on $\Sigma_E^0$.

\medskip

In contrast to the limit flow, the map $T_\epsilon$ does not have a first integral.
Moreover,  even if $x_0\in\Sigma$, it is unlikely that $x_k=T_\epsilon^k(x)$
will ever come back to $\Sigma$ (periodic points of $T_\epsilon$ are obvious exceptions) 
so the direct implementation of the Poincar\'e section is not possible.
In order to visualise a trajectory $x_k=T_\epsilon^k(x_0)$ we implement
the procedure explained in Section~\ref{Poincare}.
The procedure consists in finding a subsequence  $k_j$ such that
the trajectory jumps over $\Sigma$ between $x_{k_j}$ and $x_{k_j+1}$.
We note that $\Sigma$ does not divide the cylinder $M$ into two subsets globally
but, since the map $T_\epsilon$ is near identity, we can check this condition locally.
Since $\Sigma$ is defined by the equality $\psi_1=\psi_2$, we look for $k_j$ such that $(\psi_1^{k_j} -
\psi_2^{k_j}) (\psi_1^{k_{j}+1} - \psi_2^{k_{j}+1}) <0$, where $x_k=(\psi_1^k,\psi_2^k,J_1^k,J_2^k)$
denotes the $k$-th iterate of $x_0 \in \mathbb{T}^2 \times \mathbb{R}^2$.
Then a point $y_{k_j}\in\Sigma$ is defined by projecting $x_{k_j}$ to $\Sigma$
 along the interpolating vector field $X_n$ as defined by the equation  \eqref{Eq:yk}.

Since the section $\Sigma$ is three dimensional, the sequence $y_{k_j}$
can be plotted and used to visually inspect the behaviour of the trajectory $x_k$.

\medskip

On a moderate time scale, a further reduction of the dimension can be achieved by noting that $h_n$ 
(defined by the equation~\eqref{energy_definition}) is
an adiabatic invariant of the map, so the trajectory $x_k$ stays in a small neighbourhood
of the set $M_E^n=\{x:h_n(x,\epsilon)=E\}$ where $E=h_n(x_0,\epsilon)$.
Since $M_E^n$ is close to $M_E^0$ and the latter is nicely 
described by the coordinates $(\psi,\phi)$, we can project the points $y_{k_j}$
on the torus of the  coordinates $(\psi,\phi)$. In this way a trajectory
of a 4-dimensional map is represented by a sequence of points on a two dimensional
torus.

\medskip

We remark that this procedure relies on the closeness of the map  to the identity.
Similar to the standard map,  acceptable values of $\epsilon$ depend
on the values of the variables $J_1$ and $J_2$. In our numerical experiments we use
$|\epsilon|\le0.5$, so in practical terms the parameter does not need to be very small.

\subsubsection*{Visualization of the dynamics of $T_{\epsilon}$}

Examples of visualization of dynamics for $T_\epsilon$ are shown on Fig.~\ref{3dPoinc} and~\ref{3dPoinc0p2}.
Some comments concerning the implementation can help the reader. First,
the computation of the points $y^{k_j}$ on $\Sigma$ requires
integration of $X_n$ which is performed using a RK7-8 method that only requires
evaluating the vector field. The time $t_k$ in \eqref{Eq:yk} is then computed using the Newton
method in a way similar to   \cite{Sim89}.

Second, to  show different trajectories on a single 2-dimensional torus
we select initial conditions on $\Sigma_E^n=\Sigma \cap M_E^n$ for a fixed $E$.  
To find initial conditions with the same value of $E$, we use the following procedure:
we select values of $\psi=0,1,2,3$ and, for each value, 
we compute a point $p=(\psi,\psi,0,J_2^0)$, with $J_2^0>0$, such
that $h_n(p,\epsilon)=E$ (using a bisection method with respect to $J_2$ to get a zero of
$h_n(p,\epsilon)-E$).  Since $\nabla h_n$ is orthogonal to the vector $(0,0,-\partial h_n / \partial
 J_2, \partial h_n / \partial J_1)$, we numerically integrate the auxiliary
vector field
\begin{equation} \label{2dauxiliar}
\dot{J}_1 = -\frac{\partial{h_n}}{\partial J_2} \, ,\qquad
\dot{J}_2 = \frac{\partial{h_n}}{\partial J_1} \, ,
\end{equation}
with initial condition $(J_1(0),J_2(0))=(0,J_2)$ (using a RK7-8 method).
One obtains points $x_{0,i}=(\psi,\psi,J_1(t_i),J_2(t_i)) \in \Sigma^n_E$ for a
sequence of $t_i$ provided by the integration method. 
Finally, we use $x_{0,i}$ as initial conditions for the map~$T_\epsilon$.

\begin{figure}[thbp]
	\begin{center}
		\includegraphics[width=16cm]{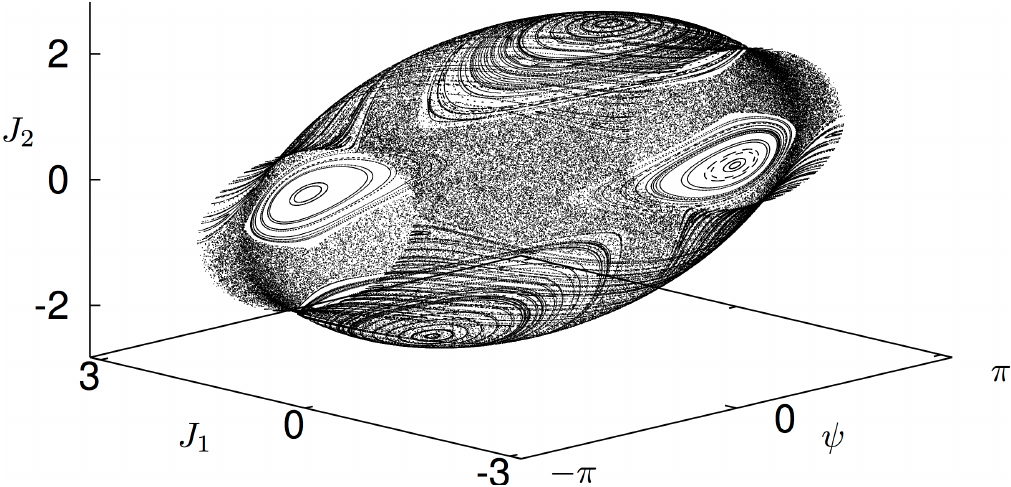}  \\[10pt]
                (a)\\[15pt]
		\includegraphics[width=8cm]{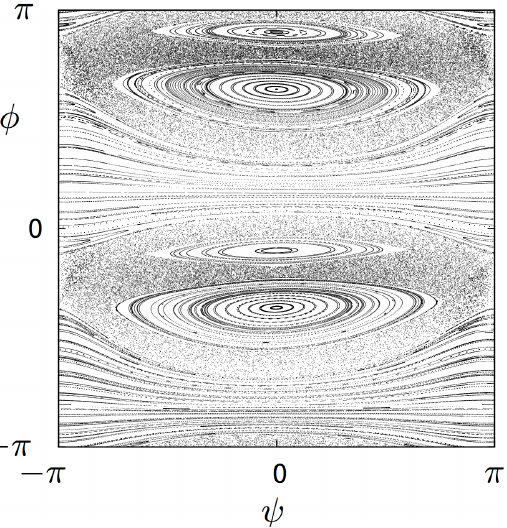}\kern 0.5cm
		\includegraphics[width=8cm]{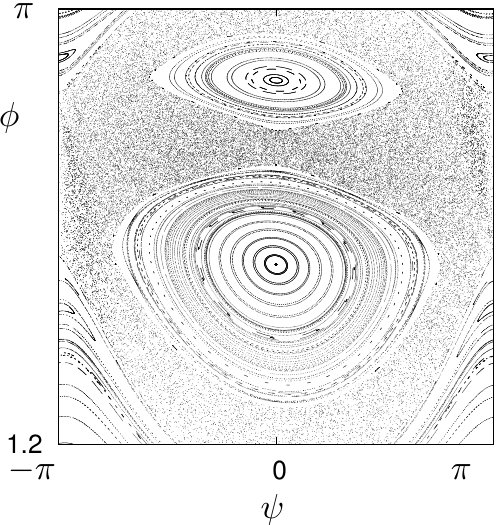}
                (b)\kern 7.25cm(c)
	\end{center}
	\caption{Trajectories of $T_\epsilon$ with the parameters defined by (\ref{parameters}) and $\epsilon=0.2$, $E=1$, $n=10$.
		(a) $500$ projections $y_{k_j}$ are plotted for each of $400$ initial conditions taken
		on $\Sigma \cap M^n_E$, (b) projection of the points of (a) onto the torus with coordinates $(\psi,\phi)$,
        where $\phi = \arg(J_1+ i J_2) \in (-\pi,\pi]$, (c) is a magnification from (b).
        }
	\label{3dPoinc0p2}
\end{figure}

Fig.~\ref{3dPoinc0p2} shows 500 projected iterates $y_{k_j}$, as defined by
\eqref{Eq:yk}, obtained from the iterates $x_{k_j}$ under the map $T_{\epsilon}$ for
each of around 400 different initial conditions.  The parameters of the map are defined by
(\ref{parameters}) and $\epsilon=0.2$.  The
hyperbolic-hyperbolic fixed point $p_4$ is used as a base point $x_b$ in the
definition of the adiabatic invariant and the initial conditions are chosen on
$M_E^n$ with  $n=10$ and $E=1$. We see that all points are located near a
2-dimensional torus  embedded in the 3-dimensional surface $\Sigma$.  The
projection of the trajectories onto the coordinates $(\psi,\phi)$ resembles the
dynamics of an area-preserving map.

\begin{figure}[thbp]
	\begin{center}
		\includegraphics[width=7cm]{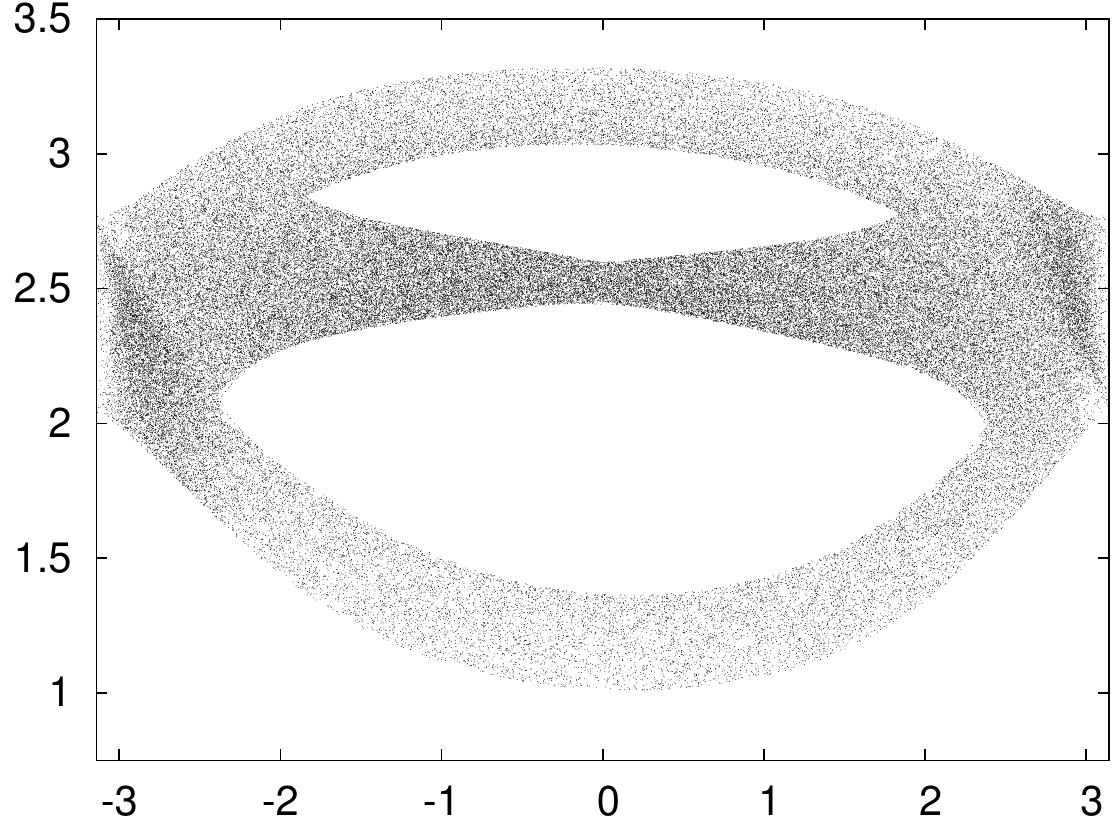}\kern1.5cm
		\includegraphics[width=7cm]{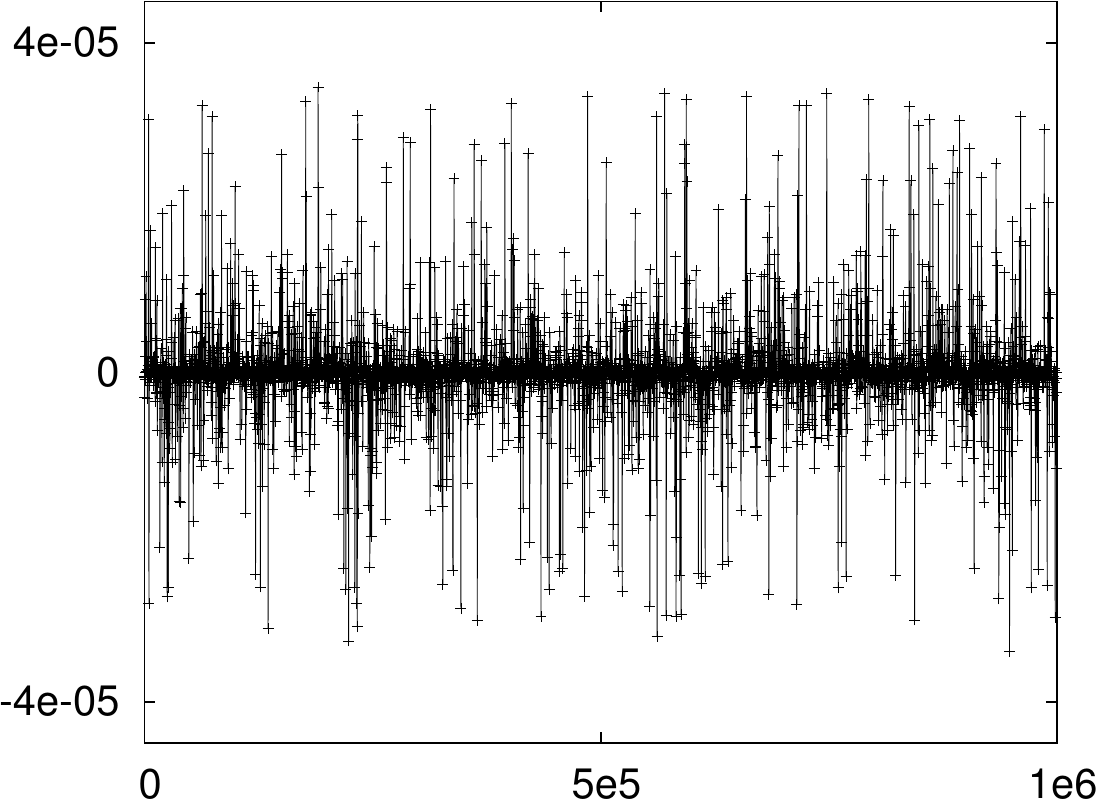}\\
		\hspace{0.5cm} (a)\kern7cm(b)\\[12pt]
		\includegraphics[width=7cm]{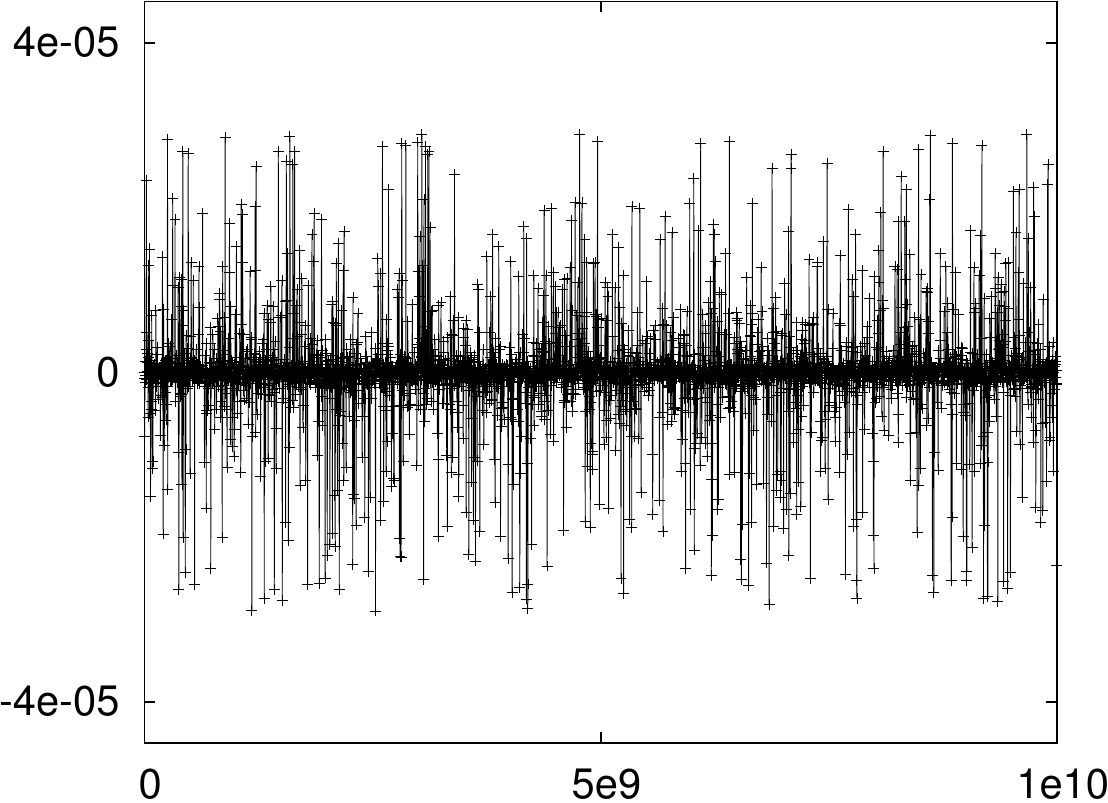}\kern1.5cm
		\includegraphics[width=7cm]{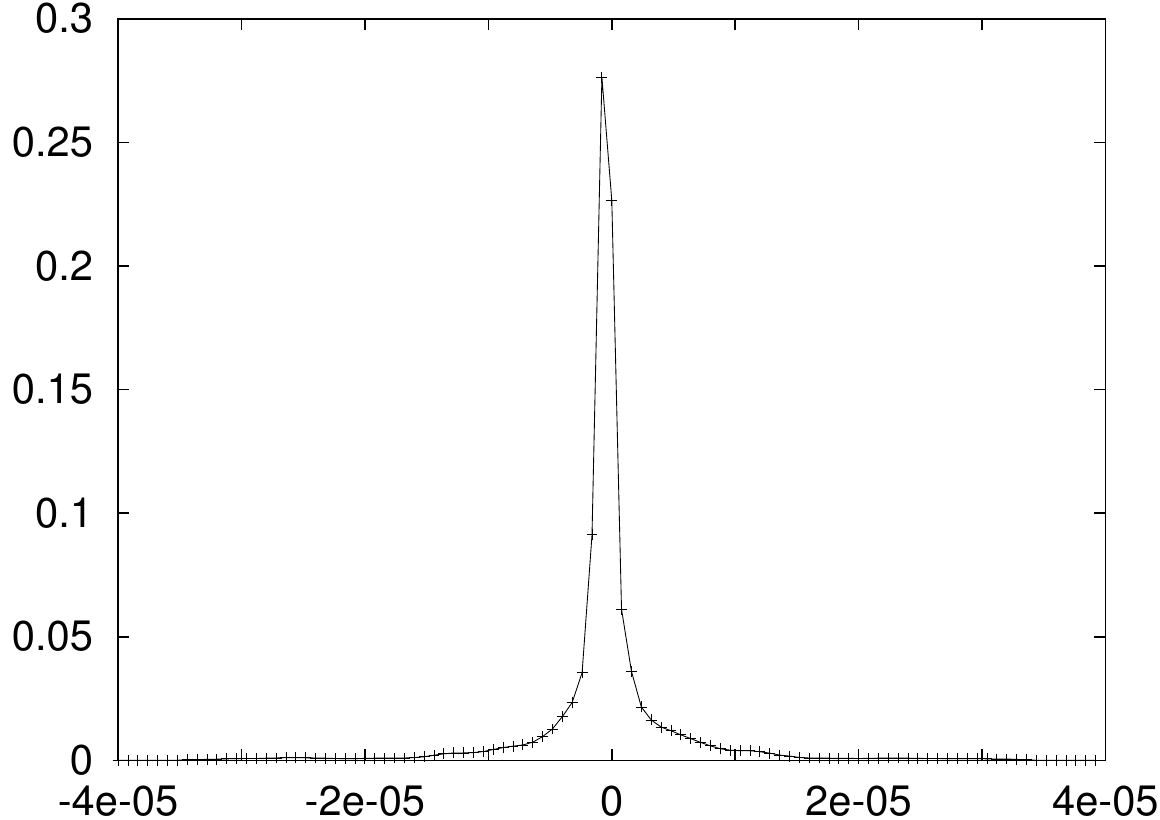}\\
		\hspace{0.5cm} (c)\kern 6.5cm(d)
	\end{center}
	\caption{ A single trajectory of the map $T_\epsilon$ with $\epsilon=0.2$
		for the initial point located at $(3, 3, -1.043523, 1.385456)$. 
		(a) Points on the Poincar\'e section are represented using coordinates 
		 $\psi_1\ (=\psi_2)$ and $\phi = \arg(J_1+ i J_2) \in (-\pi,\pi]$;
	          we show only one out of every ten consecutive points on the Poincar\'e section.
(b) The adiabatic invariant $h_n$ as a function of time (we show one point out of every 250 iterates).
(c)  The same as in (b)  but up to $10^{10}$ iterates instead (we show one point out of every $250 \times 10^4$ iterates). 
(d)  A histogram which illustrates the distribution of the adiabatic invariant
for the Poincar\'e iterates up to $t=10^6$. The values of $E$ for the $10^6$
iterates considered range in $[-10^{4},10^{4}]$ and a bin size of $8\times
10^{-7}$ has been used to obtain the distribution.
	}
	\label{pres_point}
\end{figure}

Fig.~\ref{pres_point}(a)  shows  iterates
of the point $(\psi_1,\psi_2,J_1,J_2)=(3, 3, -1.043523, 1.385456)$ that belongs to $\Sigma_E^n$ with $E=1$.
This is one of the chaotic orbits which can be seen in Fig.~\ref{3dPoinc0p2}.
Fig.~\ref{pres_point}(b) shows a plot  of the adiabatic invariant as a function of time
along this orbit. For
better visualization we show the value of the adiabatic invariant for one
out of every 250 consecutive iterates of the point under $T_{\epsilon}$. Some important observations
follow from  this plot:
\begin{itemize}
\item The orbit we study apparently belongs to a chaotic zone and is not
located on a KAM torus.	

\item The adiabatic invariant is preserved (meaning that the range of
oscillations remains unchanged) up to, at least, $10^6$ iterates.
Fig.~\ref{pres_point}(c) shows that this behaviour continues  at
least for  $10^{10}$ iterates of $T_\epsilon$. The dynamical interpretation of such
preservation is clear: our numerical method has detected the slowest variable of the
system which evolves on a very long  time scale. We note that in this example
one expects to see an (Arnold like) diffusion process and, in particular,  the
slow variable requires exponentially long (with respect to $\epsilon^{-1}$)
time to be changed by order one. Our method is able to detect such a slow
variable in a simple and efficient way.

\item There is no systematic drift of the adiabatic invariant.
Its values are distributed in a Gaussian-like way around the initial value
(see Fig.~\ref{pres_point}(d)). We remark that, for $T_{\epsilon}$
and the chosen parameters, there is numerical evidence supporting that the
detection of the expected drift in the adiabatic invariant due to the Arnold
diffusion process requires a much longer time scaling.
\end{itemize}

A final illustration of the amount of details one can visualize with this
methodology is presented in  Fig.~\ref{3dPoinc}. 
There we show a plot similar to the one in
Fig.~\ref{3dPoinc0p2} but for $\epsilon=0.35$ and $E=4$.  As before, we take
the hyperbolic-hyperbolic fixed point $p_4$ as a base point to define the
adiabatic invariant.  The interpolating vector field is constructed with
$n=10$. One can clearly recognize the typical structures that show up in 
a phase
space of an area-preserving map: islands of stability (including secondary
islands and satellites in the chaotic zone), invariant curves and chaotic zones.
Yet this is not a 2-dimensional map: we are just plotting a suitable
projection (along the solutions of the interpolating vector field) of the
iterates of the 4-dimensional map!

\section{Acknowledgments}

AV has been supported by grants MTM2016-80117-P (Spain) and 2014-SGR-1145 (Catalonia). 
 VG's research was supported by EPRC (grant EP/J003948/1).
 The authors are grateful to Ernest Fontich and Carles Sim\'o for useful discussions
 and remarks.

\bibliographystyle{alpha}

\end{document}